\documentclass[12pt,reqno,oneside]{article}
\usepackage{enumerate}
\usepackage{amsmath,amsthm}
\usepackage{graphicx}
\usepackage{amssymb}
\usepackage[all,cmtip]{xy}
\usepackage{xypic}
\usepackage{mathrsfs}
\usepackage{verbatim}
\newtheorem{thm}{Theorem}[section]
\newtheorem{cor}[thm]{Corollary}
\newtheorem{lem}[thm]{Lemma}
\newtheorem{prop}[thm]{Proposition}

\newtheorem{claim}[thm]{Claim}

\theoremstyle{definition}

\newcommand*\diff{\mathop{}\!\mathrm{d}}

\newcommand{\Addresses}{{
  \bigskip
  \footnotesize

\textsc{Korteweg-de Vries Instituut, Universiteit van Amsterdam, Postbus 94248, 1090 GE
Amsterdam, The Netherlands}\par\nopagebreak
  \textit{E-mail address}: \texttt{Z.Zhou@uva.nl}

}}
\usepackage{tikz-cd}
\DeclareMathOperator{\ord}{ord}
\DeclareMathOperator{\Imm}{Im}
\begin{document}

\title{A bound on the genus of a curve with Cartier operator of small rank}
\author{Zijian Zhou}
\date{\vspace{-5ex}}
\maketitle 
\begin{abstract}
Ekedahl showed that the genus of a curve in characteristic $p>0$ with zero Cartier operator is bounded by $p(p-1)/2$. We show the bound $p+p(p-1)/2$ in case the rank of the Cartier operator is 1, improving a result of Re.
\end{abstract}
\section{Introduction}
In \cite{Ekedahl1987} Ekedahl gave a bound for the genus $g$ of an irreducible smooth complete curve over an algebraically closed field of characteristic $p>0$ with~zero~Cartier operator:
$g\leq p(p-1)/2$.
This bound is sharp and was generalized by Re to curves with Cartier operator of given rank \cite{rre}. He showed for hyperelliptic curves whose Cartier operator has rank $m$ the bound
$
g<mp+(p+1)/2
$, 
and for non-hyperelliptic curves
\begin{equation}\label{Re formular}
g\leq mp+(m+1)p(p-1)/2\, .
\end{equation}
He also showed that if the Cartier operator $\mathcal{C}$ is nilpotent and $\mathcal{C}^r=0$, then 
$$
g\leq p^r(p^r-1)/2\, .
$$
In this paper we give a strengthening of the result (\ref{Re formular}) of Re.
\begin{thm}\label{main theorem rk 1}
Let $X$ be an irreducible smooth complete curve of genus g over an algebraically closed field of characteristic $p>0$. If the rank of the Cartier operator of $X$ equals 1, we have
$
g\leq p+p(p-1)/2
$.
\end{thm}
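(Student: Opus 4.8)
The plan is to combine the semilinear structure of the Cartier operator with a pole-order analysis of exact differentials, refining Ekedahl's count so that the rank-one hypothesis costs only one extra ``level.'' Write $V=H^0(X,\Omega^1_X)$, so $\dim_k V=g$, and recall that $\mathcal{C}$ is $p^{-1}$-linear. Since its rank is $1$, the kernel $W=\ker(\mathcal{C}|_V)$ has dimension $g-1$ and the image is a line $\langle\eta\rangle$. The structural fact I would exploit is that $W$ consists precisely of the \emph{exact} holomorphic differentials: $\mathcal{C}\omega=0$ if and only if $\omega=df$ for some $f\in K=k(X)$. Choosing a separating variable $x$ (so that $1,x,\dots,x^{p-1}$ is a $K^p$-basis of $K$) and writing $\omega=\bigl(\sum_{i=0}^{p-1}u_i^{\,p}x^i\bigr)\,dx$, one has $\mathcal{C}\omega=u_{p-1}\,dx$, so $W$ is cut out by $u_{p-1}=0$. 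I would also record that the target bound rewrites as $g\le p(p+1)/2=\binom{p+1}{2}$, exactly one triangular step beyond Ekedahl's $\binom{p}{2}$; this is the shape the argument should reproduce.

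The engine is a pole count. A differential $df$ is holomorphic exactly when, at every point, the principal part of $f$ involves only pole orders divisible by $p$, equivalently that principal part is locally a $p$-th power. Hence $W\cong M/K^p$, where $M=\{f\in K:\ df\ \text{is holomorphic}\}$. Following Ekedahl, I would bound $\dim_k M/K^p$ by examining, at a suitably chosen point $P$ with local parameter $t$, which pole orders $mp$ survive modulo global $p$-th powers: an order $mp$ can be removed by subtracting some $cu^p$ precisely when $m$ is a non-gap of the Weierstrass semigroup at $P$, so the genuinely new classes are indexed by \emph{gaps} $m$, distributed across the $p-1$ residues coming from the $x^0,\dots,x^{p-2}$ components. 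Summing the admissible contributions over these $p-1$ levels should produce the triangular total $\binom{p}{2}$. As a consistency check, in sheaf language $W=H^0(B^1)$ with $B^1=\ker\bigl(\mathcal{C}\colon F_*\Omega^1_X\to\Omega^1_X\bigr)$ a rank $p-1$ bundle of degree $(g-1)(p-1)$ (here $F$ is the relative Frobenius); for $p=2$ it is a single line bundle of degree $g-1$ with $h^0=g-1$, and Clifford's theorem gives at once $g\le 3=\binom{3}{2}$.

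It then remains to account for the one-dimensional image. The non-exact direction corresponds to allowing the top coefficient $u_{p-1}$ to be nonzero, that is, opening the single level $i=p-1$ that Ekedahl's hypothesis had forbidden; the extra classes this contributes are bounded by the number of admissible orders in that one level, which is at most $p$. Adding this to the kernel count yields $g\le\binom{p}{2}+p=\binom{p+1}{2}$, as desired. I expect the main obstacle to be precisely this last step: one must (i) globalize the pole analysis so that the count is independent of the auxiliary point $P$ and of the separating variable, and (ii) prove that the image level enlarges the admissible configurations by \emph{at most} $p$ rather than by a full further triangular block $\binom{p}{2}$ — it is exactly here that the improvement over Re's bound $g\le p+p(p-1)$ is gained. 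A secondary point to dispatch is the dichotomy for $\eta$: whether $\mathcal{C}\eta=\eta$ (so that $\eta$ is logarithmic and $X$ has $p$-rank one) or $\mathcal{C}\eta=0$ (nilpotent image contained in the kernel); both cases must be shown to respect the same $+p$ budget.
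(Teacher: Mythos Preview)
Your proposal takes a completely different route from the paper, but as written it has a genuine gap at precisely the point you flag, and the surrounding heuristic is internally inconsistent.

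The kernel/image split of $\mathcal{C}$ does not align with the ``level'' decomposition the way you suggest. You record $\dim W=g-1$ and then propose an Ekedahl-style pole count over levels $i=0,\dots,p-2$ to bound the exact part by $\binom{p}{2}$. Read literally this says $\dim W\le p(p-1)/2$, which is \emph{false}: the paper notes that the bound $g\le p+p(p-1)/2$ is sharp for $p=2$, so a genus-$3$ curve in characteristic~$2$ with $\operatorname{rank}\mathcal{C}=1$ has $\dim W=2>1=p(p-1)/2$. Ekedahl's argument does not bound $\dim\ker\mathcal{C}$; it bounds $g$ under the hypothesis that \emph{every} holomorphic differential is exact, which is what makes the gap bookkeeping at a single point control the global dimension. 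Once a non-exact $\eta$ exists, that control is lost for the remaining $g-1$ exact forms. Consequently your final accounting ``kernel count $+\,p$'' cannot literally mean $\dim W+p$ (since $g=\dim W+1$), and the mechanism by which opening level $i=p-1$ costs exactly $p$ rather than $1$ or another $\binom{p}{2}$ is simply not supplied. Your Clifford check on $B^1$ for $p=2$ is correct but does not generalize: for $p>2$ the sheaf $B^1$ has rank $p-1$ and no comparable Clifford bound is available. The nilpotent/semisimple dichotomy for $\eta$ that you mention is likewise left untreated.

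For contrast, the paper argues entirely through linear systems, refining Re's method. It first exhibits a point $R$ with $h^0(pR)=1+h^0((p-1)R)$ (Corollary~\ref{rank1 point}), then proves growth estimates $h^0(pD_{2n})-h^0(pD_{2n-2})\ge 4n-3$ for general effective divisors $D_j$ of degree $j$, via a strengthened base-point-free pencil argument (Proposition~\ref{divisor} and Corollaries~\ref{Cor rank 1}, \ref{cor2 rank 1}). Dualizing by Riemann--Roch and telescoping, together with the terminal estimate $h^0(K_X-pD_{p-1})\le 1$, yields $g\le p+p(p-1)/2$. The rank-one hypothesis enters only through Proposition~\ref{ri.re prop2.2} and the existence of $R$; no decomposition of $H^0(X,\Omega^1_X)$ into exact and non-exact parts is used.
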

This is sharp for example for $p=2$. In the case of higher rank we have the following result.
\begin{thm}\label{main theorem rk 2}
Let $X$ be an irreducible smooth complete curve of genus g over an algebraically closed field of characteristic $p>0$. If the rank of the Cartier operator of $X$ equals 2, and if $X$ possesses a point $R$ such that $|pR|$ is base point free, then
$
g\leq 2p+p(p-1)/2
$,
while if $X$ does not have such a point, one has the bound
$
g\leq 2p+(4p^2-5p)/2
$.

\end{thm}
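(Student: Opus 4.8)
The plan is to follow the strategy behind Theorem \ref{main theorem rk 1}, translating the rank hypothesis into a constraint on the Weierstrass gap sequence at a well-chosen point $R$ and then summing the contributions of the gaps by residue class modulo $p$. Recall that $\ker\mathcal{C}$ is exactly the space of exact holomorphic differentials $\omega=df$, so $\dim_k\ker\mathcal{C}=g-2$, and that a function $f$ has $df$ holomorphic precisely when every pole of $f$ has order divisible by $p$. Dually, by Serre duality a positive integer $n$ is a Weierstrass gap at $R$ if and only if there is a holomorphic differential vanishing to order exactly $n-1$ at $R$; the $g$ gaps are thus the vanishing orders (shifted by $1$) of $H^0(X,\Omega^1)$ at $R$. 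So first I would fix $R$ and reduce the theorem to bounding the size of this gap set by the two displayed quantities.

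Second, I would exploit the local description of $\mathcal{C}$. In a uniformizer $u$ at $R$ one has $\mathcal{C}(u^{jp-1}\,du)=u^{j-1}\,du$, while $\mathcal{C}$ annihilates the leading term $u^a\,du$ whenever $a\not\equiv-1\pmod p$. Hence a differential whose order at $R$ is $a\equiv-1\pmod p$ is carried by $\mathcal{C}$ to one of order $(a+1-p)/p$, and the induced map on such orders is injective. Since $\Imm\mathcal{C}$ is $2$-dimensional it has only two distinct vanishing orders at $R$, so at most two vanishing orders are $\equiv-1\pmod p$; equivalently, \emph{at most two Weierstrass gaps at $R$ are divisible by $p$}. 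Iterating $\mathcal{C}$ organises the remaining vanishing orders into chains along which the order is divided by $p$ (up to the correction above) and which terminate inside the two-dimensional stable image; bounding the length and starting point of these chains is what should reproduce the quadratic term $p(p-1)/2$, exactly as in the rank-zero bound of Ekedahl.

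Third comes the point of the case distinction. When $|pR|$ is base point free I would take $R$ to be such a point: then $h^0(pR)>h^0((p-1)R)$, so there is a function $x$ with $(x)_\infty=pR$, giving a separable degree-$p$ morphism $x\colon X\to\mathbb{P}^1$ totally (wildly) ramified at $R$, i.e. a global separating coordinate with a single pole, of order $p$, so that $p\in H(R)$ and $H(R)\supseteq p\mathbb{Z}_{\ge0}$. Writing a holomorphic differential as $f\,dx$ with $f=\sum_{i=0}^{p-1}g_i^p x^i$ gives the clean formula $\mathcal{C}(f\,dx)=g_{p-1}\,dx$, so the rank-$2$ hypothesis says that the $g_{p-1}$ span only a $2$-dimensional space of differentials. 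Converting holomorphy of $f\,dx$ into pole bounds for the $g_i$ on $\mathbb{P}^1$ should then pin down the Apéry set $\{c_r=\min(H(R)\cap(r+p\mathbb{Z}))\}_{r=1}^{p-1}$ tightly enough that $g=\sum_{r=1}^{p-1}(c_r-r)/p\le 2p+p(p-1)/2$. If no such point exists, every complete linear system of degree $p$ has a base point and no totally ramified degree-$p$ coordinate is available; I would then fall back on the intrinsic count of the second step, which controls the gaps only through the coarser iterated division by $p$ together with the bound of two gaps divisible by $p$, and absorb the resulting loss into the weaker estimate $g\le 2p+(4p^2-5p)/2$.

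The hard part, I expect, is the ramification bookkeeping in the third step: because $x$ is wildly ramified at $R$, the divisor of $dx$ and the different must be computed precisely to turn holomorphy of $f\,dx$ into sharp pole conditions on the $g_i$, and it is this that should distinguish the coefficient $1$ of $p(p-1)/2$ here from Re's coefficient $m+1$. A secondary difficulty is verifying that the failure of $|pR|$ to be base point free for every $R$ genuinely forces only the weaker quadratic bound rather than an intermediate one; this is where one must argue that the obstruction to the clean coordinate can cost at most the extra $(4p^2-5p)/2-p(p-1)/2$ in the gap count.
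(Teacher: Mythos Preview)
Your approach is entirely different from the paper's, and as written it has a genuine gap.

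The paper never looks at a single point. It proves Theorem~\ref{main theorem rk 2} globally, by pushing Proposition~\ref{divisor} (the enhanced version of Re's counting lemma) through a sequence of general effective divisors $D_n$ of growing degree. Corollary~\ref{Cor rank 2} gives, for general $Q_1,Q_2,Q_3$ and $E=pT_n$, a lower bound for $h^0(E+\sum pQ_i+pR)-h^0(E+\sum pQ_i)$ (or the analogous two-point version when no base-point-free $|pR|$ exists). These feed into Corollaries~\ref{cor2 rank 2 case1} and~\ref{cor2 rank 2 case 2}, which produce inequalities of the form $h^0(K_X-pD_{3n-3})-h^0(K_X-pD_{3n})\le 3p-9n+9$ (resp.\ $3p-3n+1$) together with a bound on $h^0(K_X-pD_{p-1})$ (resp.\ $h^0(K_X-pD_{2p-1})$). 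The theorem then follows by telescoping $g=h^0(K_X)=\sum_n\bigl(h^0(K_X-pD_{3n-3})-h^0(K_X-pD_{3n})\bigr)+h^0(K_X-pD_{\bullet})$. No gap sequences, no local expansions.

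Your local argument, by contrast, hinges on the claim that the constraints you extract at a \emph{single} point $R$ suffice to bound $g$. They do not. Your step~2 correctly shows that at most two vanishing orders of $H^0(\Omega^1_X)$ at $R$ are $\equiv -1 \pmod p$ (equivalently, at most two gaps at $R$ are divisible by $p$). But this alone gives no bound on $g$: for instance the numerical semigroup $\langle p,N\rangle$ with $\gcd(N,p)=1$ has \emph{no} gaps divisible by $p$, yet its genus $(p-1)(N-1)/2$ is unbounded in $N$. So something beyond the residue-class-zero count is essential, and your ``chains under iteration of $\mathcal{C}$'' do not supply it: when $\ord_R(\omega)=a\not\equiv -1\pmod p$, the leading term of $\omega$ is annihilated by $\mathcal{C}$ and $\ord_R(\mathcal{C}\omega)$ is governed by higher-order coefficients of $\omega$, which you have no control over. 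There is no well-defined map on vanishing orders outside the class $-1\pmod p$, so the ``chains'' are not well-posed and cannot reproduce Ekedahl's $p(p-1)/2$.

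Step~3 does not rescue this. Having a function $x$ with a unique pole of order $p$ at $R$ gives $p\in H(R)$, hence $p\mathbb{Z}_{\ge 0}\subset H(R)$, but as the example above shows this still leaves the Ap\'ery numbers $c_r$ unbounded. Your proposed remedy---``converting holomorphy of $f\,dx$ into pole bounds for the $g_i$ on $\mathbb{P}^1$''---is where the content would have to be, but the $g_i$ live in $k(X)$, not $k(\mathbb{P}^1)$, and you have not explained what pole bounds the rank-$2$ hypothesis actually forces on them, nor how those bounds would pin down $\sum_r (c_r-r)/p$ to at most $2p+p(p-1)/2$. The second case is handled only by the sentence ``absorb the resulting loss,'' which is not an argument. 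In short, the observation about gaps divisible by $p$ is correct and pleasant, but it is far from a proof; the actual work---which in the paper is done by Proposition~\ref{divisor} applied iteratively to general divisors---is missing.
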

\section{The Cartier operator and linear systems}
From now on, by a curve we mean an irreducible smooth complete curve over an algebraically closed field $k$ of characteristic $p>0$. For a curve $X$ with function field $k(X)$, Cartier \cite{MR0084497} defined  an operator on 
rational differential forms with the following properties:

1) $\mathcal{C}(\omega _1+\omega _2)=\mathcal{C}(\omega _1)+\mathcal{C}(\omega _2)\, ,$

2) $\mathcal{C}(f ^p\omega)=f\, \mathcal{C}(\omega)\, ,$

3) $\mathcal{C}(\diff f)=0\, ,$

4) $\mathcal{C}(\diff f/f)=\diff f/f  \, , $

\noindent where $f \in k(X)$ is non-zero. Moreover, recall that if $x$ is a separating variable of $k(X)$, any $f\in k(X)$ can be written as 
\begin{equation}\label{cartier define}
f=f_0^p+\ldots+f^p_{p-1}x^{p-1},~\text{with}~f_i\in k(X) \, .
\end{equation}
For a rational differential form $\omega = f\diff x$ with $f$ as in (\ref{cartier define}), we have $\mathcal{C}(\omega)=f_{p-1}\diff x$. In particular $\mathcal{C}^n(f^i\diff f)=f^{(i+1)/p^n-1}\diff f$ if $p^n|i+1$, and $\mathcal{C}^n(f^i\diff f)=0$ otherwise. Furthermore, for distinct points $Q_1$, $Q_2$ on $X$, if there is a rational differential form $\omega$ that ${\ord _{Q_1}(\omega)\geq p}$ and $\ord_{Q_2} (\omega)=p-1$, then by property $2)$ above we have $\ord_{Q_1}(\mathcal{C}(\omega))\geq 1$ and $\ord_{Q_2}(\mathcal{C}(\omega))=0$.

This operator $\mathcal{C}$ induces a map $\mathcal{C}: \, H^0(X,\Omega_X^1) \to H^0(X,\Omega_X^1)$ which is $\sigma ^{-1}$-linear, that is, it satisfies properties 1) and 2) above, with $\sigma$ denoting the Frobenius automorphism of $k$. We are interested in the relation between the rank of the Cartier operator, defined as $\dim_k\mathcal{C}(H^0(X,\Omega_X^1))$, and the genus~$g$.

Re showed that there is a relation between the rank of Cartier operator and the geometry of linear systems on a curve. We will list some results that we will use and refer for the proof to Re's paper \cite{rre}.
In the following, $X$ denotes a non-hyperelliptic curve and for $D$ a divisor on $X$,  we will denote by $H^i(D)$ the vector space $H^i(X,\mathcal{O} _X (D))$. 

We will say that a statement holds for a general effective divisor of degree$~n$ on $X$ if the statement is true for divisors in a nonempty open set of effective divisors of degree $n$ on $X$. We start with a few results of Re.
\begin{prop}\cite[Prop.\ 2.2]{rre} \label{ri.re prop2.2}
Let $X$ be a non-hyperelliptic curve with ${\rm rank}(\mathcal{C})=m$. Then for a general effective divisor $D=Q_1+\ldots+Q_{m+1}$ on $X$ with $\deg D=m+1$, one has
$$
h^0(pD)=1+h^0(pD-Q_{m+1}) \, .
$$
\end{prop}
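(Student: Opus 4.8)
The plan is to reduce the statement to a vanishing statement about holomorphic differentials and then exploit the local behaviour of $\mathcal{C}$ together with a genericity argument. Since removing a point drops $h^0$ by at most one, it suffices to prove $h^0(pD)\geq h^0(pD-Q_{m+1})+1$, i.e.\ that $Q_{m+1}$ is not a base point of $|pD|$. Applying Riemann--Roch to both $pD$ and $pD-Q_{m+1}$ gives
$$
h^0(pD)-h^0(pD-Q_{m+1})=1+h^1(pD)-h^1(pD-Q_{m+1}),
$$
so the claim is equivalent to $h^1(pD)=h^1(pD-Q_{m+1})$. By Serre duality this reads
$$
H^0\bigl(X,\Omega_X^1(-pD)\bigr)=H^0\bigl(X,\Omega_X^1(-pD+Q_{m+1})\bigr),
$$
where the left space consists of differentials $\omega$ with $\ord_{Q_i}(\omega)\geq p$ for all $i$, and the right space of those with $\ord_{Q_i}(\omega)\geq p$ for $i\leq m$ and $\ord_{Q_{m+1}}(\omega)\geq p-1$. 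The inclusion $\subseteq$ is automatic, so only the reverse inclusion requires work.

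Suppose the inclusion were strict. Then there is a holomorphic $\omega$ with $\ord_{Q_i}(\omega)\geq p$ for $i\leq m$ and $\ord_{Q_{m+1}}(\omega)=p-1$ exactly. Applying the local description of $\mathcal{C}$ recalled above, $\mathcal{C}(\omega)$ is again holomorphic with $\ord_{Q_i}(\mathcal{C}(\omega))\geq 1$ for $i\leq m$, while $\ord_{Q_{m+1}}(\mathcal{C}(\omega))=0$; in particular $\mathcal{C}(\omega)\neq 0$. Thus $\mathcal{C}(\omega)$ is a nonzero element of the image $V:=\mathcal{C}\bigl(H^0(X,\Omega_X^1)\bigr)$ that vanishes at the $m$ points $Q_1,\dots,Q_m$. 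By hypothesis $\dim_k V=m$, so vanishing at $m$ points is exactly enough to cut $V$ down to zero, \emph{provided} these conditions are independent for a general choice of points.

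The remaining and main step is therefore the genericity argument: for a general $D$ no nonzero element of the fixed $m$-dimensional space $V$ vanishes at $Q_1,\dots,Q_m$, which contradicts the existence of $\mathcal{C}(\omega)$ and forces the desired equality. I would argue inductively, setting $V_0=V$: a general $Q_1$ is not a common zero of $V_0$ (a nonzero space of differentials on a curve has only finitely many base points), so $V_1:=\{\omega\in V_0:\ord_{Q_1}(\omega)\geq 1\}$ has dimension $m-1$; continuing, for $i<m$ the space $V_i$ is nonzero and hence has finite base locus, so a general $Q_{i+1}$ drops the dimension by one, and after $m$ steps $V_m=0$. Each step excludes only a proper closed subset, so the good locus is a nonempty open, symmetric set of ordered tuples $(Q_1,\dots,Q_m)$, which descends to a nonempty open set of effective divisors; letting $Q_{m+1}$ range freely keeps the locus open and dense. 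The point demanding care is precisely this independence: one must verify that the intermediate systems $V_i$ never force $Q_{i+1}$ to be a base point, which is exactly where the finiteness of base loci on a curve enters.
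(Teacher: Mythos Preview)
The paper does not actually give a proof of this proposition; it is quoted verbatim from Re \cite{rre} with the explicit remark that the proofs of these preliminary results are to be found there. So there is no ``paper's own proof'' to compare against.

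Your argument is correct and is exactly the standard one. The reduction via Riemann--Roch and Serre duality to the non-existence of a differential $\omega$ with $\ord_{Q_i}(\omega)\ge p$ for $i\le m$ and $\ord_{Q_{m+1}}(\omega)=p-1$, followed by applying $\mathcal{C}$ and using that $\dim_k\Imm(\mathcal{C})=m$ together with a genericity argument, is precisely Re's method. In fact, this is the same mechanism the present paper uses in its own Lemma~\ref{existence of base point free divisor of mine}: from the failure of the desired equality one produces an $\omega_D$ satisfying \eqref{omega_D 1}, sets $\eta=\mathcal{C}(\omega_D)$, and derives a contradiction from the vanishing conditions \eqref{eta_1} on the $m$-dimensional image. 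Your inductive dimension-drop argument on the subspaces $V_i\subset V$ is the clean way to justify the ``general position'' step; the only cosmetic point is that the condition $V_m=0$ is visibly symmetric in $Q_1,\dots,Q_m$, so the descent to effective divisors is immediate. Note also that the non-hyperelliptic hypothesis plays no role in your proof (nor is it needed for this particular statement).
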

This implies for a general divisor $D$ with $\deg D>{\rm rank}(\mathcal{C})$, that the linear system $|pD|$ is base point free. As a corollary, we have the following.
\begin{cor}\cite[Prop.\ 2.3]{rre}\label{gap}
If X is a non-hyperelliptic curve with zero Cartier operator, then $h^0(p\, Q)\geq 2$ for any point $Q$ on $X$. 
\end{cor}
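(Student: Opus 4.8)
The plan is to read off the statement from Proposition~\ref{ri.re prop2.2} in the rank-zero case and then to promote the resulting generic statement to one about every point by a semicontinuity argument. First I would specialize Proposition~\ref{ri.re prop2.2} to ${\rm rank}(\mathcal{C})=m=0$. A general effective divisor of degree $m+1=1$ is just a general point $Q_1\in X$, and the proposition then reads
$$
h^0(pQ_1)=1+h^0(pQ_1-Q_1)=1+h^0((p-1)Q_1).
$$
Since $p\geq 2$, the divisor $(p-1)Q_1$ is effective, so the constant functions already give $h^0((p-1)Q_1)\geq 1$, whence $h^0(pQ_1)\geq 2$ for all $Q_1$ in some nonempty open set $U\subseteq X$. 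As $X$ is irreducible, $U$ is dense.

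The remaining task, and the real content of the corollary, is to pass from a general point to an \emph{arbitrary} point $Q$. For this I would work in the family of all the divisors $pQ$ at once. Let $\Delta\subset X\times X$ be the diagonal, let $\mathcal{L}=\mathcal{O}_{X\times X}(p\Delta)$, and let $\pi\colon X\times X\to X$ be the second projection. The fibre of $\pi$ over a point $Q$ is $X\times\{Q\}\cong X$, and $\mathcal{L}$ restricts on this fibre to $\mathcal{O}_X(pQ)$, since $p\Delta$ meets $X\times\{Q\}$ in the divisor $p\,(Q,Q)$. The morphism $\pi$ is proper and flat and $\mathcal{L}$ is flat over the base, so the semicontinuity theorem applies and the function $Q\mapsto h^0(X,\mathcal{O}_X(pQ))$ is upper semicontinuous on $X$. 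In particular the locus $\{\,Q\in X : h^0(pQ)\geq 2\,\}$ is closed.

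By the first step this closed locus contains the dense open set $U$, hence it is all of $X$, giving $h^0(pQ)\geq 2$ for every point $Q$, as required. The main obstacle is exactly this last passage: the generic inequality is immediate from Proposition~\ref{ri.re prop2.2}, and the substance lies in the fact that semicontinuity runs in the favorable direction, so that a lower bound on $h^0$ holding on a dense open set automatically propagates to the whole curve. The only points to take care with are verifying the flatness of the family and that $\mathcal{L}$ restricts correctly to $\mathcal{O}_X(pQ)$ on each fibre, both of which are routine for the line bundle $\mathcal{O}_{X\times X}(p\Delta)$.
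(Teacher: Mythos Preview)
Your argument is correct and matches the paper's (very brief) treatment: the paper does not prove this statement itself but simply quotes it from Re, noting only that it is a corollary of Proposition~\ref{ri.re prop2.2}. Your semicontinuity step for the family $\mathcal{O}_{X\times X}(p\Delta)\to X$ is the standard way to pass from the generic inequality furnished by that proposition (with $m=0$) to the claim for every point, so there is nothing further to compare.
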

The following lemma gives a way of estimating dimensions of linear systems.
\begin{lem}\cite[Lemma 3.1]{rre}\label{R.R}
Assume that $Q_1$ and $Q_2$ are general points on a non-hyperelliptic curve $X$ and that $D$ is a divisor. Then we have 
$$
h^0(pD+p\,Q_1+p\,Q_2)-h^0(pD+p\,Q_1)\geq h^0(pD+p\,Q_1)-h^0(pD) \, .
$$
\end{lem}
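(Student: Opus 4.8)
The plan is to view the two ``intermediate'' spaces as subspaces of the largest one and to apply the Grassmann dimension formula. Write $A=H^0(pD+p\,Q_1)$ and $B=H^0(pD+p\,Q_2)$, and regard both as subspaces of $W=H^0(pD+p\,Q_1+p\,Q_2)$: this is legitimate because a rational function whose pole divisor is dominated by $pD+p\,Q_i$ is a fortiori dominated by $pD+p\,Q_1+p\,Q_2$. In particular $A+B\subseteq W$, so $\dim(A+B)\le h^0(pD+p\,Q_1+p\,Q_2)$.

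Next I would compute the intersection $A\cap B$. A function lying in both $A$ and $B$ has its pole divisor dominated by both $pD+p\,Q_1$ and $pD+p\,Q_2$, hence by their infimum $pD+\min(p\,Q_1,p\,Q_2)$. Since $Q_1\neq Q_2$, which holds for a general pair of points, the coefficientwise minimum $\min(p\,Q_1,p\,Q_2)$ vanishes, so $A\cap B=H^0(pD)$. Feeding this into the Grassmann formula $\dim(A+B)=\dim A+\dim B-\dim(A\cap B)$ and combining with the inclusion above yields the superadditivity estimate
\[
h^0(pD+p\,Q_1+p\,Q_2)\;\ge\;h^0(pD+p\,Q_1)+h^0(pD+p\,Q_2)-h^0(pD).
\]

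Finally I would invoke the genericity hypothesis. As $Q$ ranges over $X$ the divisors $pD+p\,Q$ form a family, and upper semicontinuity of $h^0$ shows that $h^0(pD+p\,Q)$ attains its minimal value on a nonempty open subset; since $Q_1$ and $Q_2$ are both general, $h^0(pD+p\,Q_1)=h^0(pD+p\,Q_2)$. Subtracting $h^0(pD+p\,Q_1)$ from both sides of the displayed inequality and then replacing $h^0(pD+p\,Q_2)$ by the equal quantity $h^0(pD+p\,Q_1)$ on the right gives exactly the asserted bound. I expect the only genuine subtlety to be this last semicontinuity step, which is precisely what forces the points to be \emph{general} rather than arbitrary, together with the harmless but essential requirement $Q_1\neq Q_2$ used in the intersection computation. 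It is worth noting that neither the factor $p$ nor the characteristic plays any role here: the argument works verbatim for any two effective divisors with disjoint support, and non-hyperellipticity is not actually needed for this particular step.
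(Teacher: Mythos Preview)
Your argument is correct. The paper does not actually supply a proof of this lemma; it merely cites Re \cite[Lemma~3.1]{rre}, so there is nothing in the present paper to compare against directly. That said, the approach you take---embedding $H^0(pD+pQ_1)$ and $H^0(pD+pQ_2)$ into $H^0(pD+pQ_1+pQ_2)$, computing their intersection as $H^0(pD)$ via the coefficientwise minimum, and then using semicontinuity to equate $h^0(pD+pQ_1)$ with $h^0(pD+pQ_2)$ for general points---is the standard and expected one, and is essentially what Re does. Your closing observations are also accurate: the non-hyperelliptic hypothesis and the characteristic play no role in this particular inequality; they enter only in the other results of the paper where the Cartier operator and base-point-freeness are invoked.
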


We now give a generalization of a result of Re.
\begin{prop}\label{divisor}
Let $D$,$E$,$F$ be effective divisors on non-hyperelliptic curve X such that

\noindent$(1)$ $|F|$ is base point free;\\
$(2)$ $D > 0$ and ${\rm Supp}(D) \cap {\rm Supp}(E)=\emptyset$;\\
$(3)$ There are points $Q_1,\ldots,Q_{m+1} \in {\rm Supp}(D)$ and a divisor $F_1 \in |F|$ such~that $\ord _{Q_i}(F_1)=1$ for $1\leq i \leq m+1$ and ${\rm Supp}(D)\cap {\rm Supp}(F_1)=~\{Q_1,\ldots,Q_{m+1}\}$;\\
$(4)$ For these points $Q_i$ one has $h^0(E+\sum\limits_{i=1}^{m+1}Q_i)=h^0(E)$;\\
$(5)$ $Q_i$ is not a base point of $|D+E+F|$ for $i =1,\ldots,m+1$ and there exists $s_1,\ldots,s_{m+1}\in H^0(D+E+F)$ such that
$$
\ord _{Q_i}(s_i)=0\, , ~
\ord _{Q_i}(s_j)\geq p, ~~i\neq j,~i,j=1,\ldots,m+1\, .
$$
Then we have
$$
h^0(D+E+F)-h^0(E+F)\geq h^0(D+E)-h^0(E)+m+1\, .
$$
\end{prop}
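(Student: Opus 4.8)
The plan is to study the two restriction-to-$D$ maps and compare them via multiplication by a section cutting out $F_1$. Write $r_{D+E}\colon H^0(D+E)\to H^0(\mathcal{O}_D(D+E))$ and $r_{D+E+F}\colon H^0(D+E+F)\to H^0(\mathcal{O}_D(D+E+F))$ for restriction to the effective divisor $D$. Since $\mathrm{Supp}(D)\cap\mathrm{Supp}(E)=\emptyset$, the kernels are $H^0(E)$ and $H^0(E+F)$ respectively, so that $\dim\Imm(r_{D+E})=h^0(D+E)-h^0(E)$ and $\dim\Imm(r_{D+E+F})=h^0(D+E+F)-h^0(E+F)$. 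Writing $A=\Imm(r_{D+E+F})$ and $B=\Imm(r_{D+E})$, the proposition is the assertion $\dim A\ge\dim B+m+1$. Let $t_1\in H^0(F)$ be a section with divisor $F_1$, which exists by $(1)$ and $(3)$. Multiplication by $t_1$ carries $H^0(D+E)$ into $H^0(D+E+F)$ and commutes with restriction, so it induces $\bar t_1\colon H^0(\mathcal{O}_D(D+E))\to H^0(\mathcal{O}_D(D+E+F))$ with $\bar t_1(B)\subseteq A$.

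Next I would analyse $\bar t_1$ locally. Away from $\mathrm{Supp}(F_1)$ the section $t_1$ is a unit, so by $(3)$ the map $\bar t_1$ is an isomorphism at every point of $\mathrm{Supp}(D)$ other than $Q_1,\dots,Q_{m+1}$, where $F_1$ vanishes to order one. A computation in a local uniformiser $z$ at $Q_i$ then shows that $\bar t_1$ has a one-dimensional kernel, spanned by the simple-pole direction $z^{-1}$, and a one-dimensional cokernel, spanned by the top-pole direction. Hence $K:=\ker\bar t_1$ and $C:=\mathrm{coker}\,\bar t_1$ both have dimension $m+1$ and are supported on $\{Q_1,\dots,Q_{m+1}\}$. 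Let $\pi\colon H^0(\mathcal{O}_D(D+E+F))\to C$ be the projection; then $\bar t_1(B)\subseteq\ker(\pi|_A)$, and from
$$\dim A=\dim\ker(\pi|_A)+\dim\Imm(\pi|_A)$$
the statement follows once I show two things: that $\bar t_1|_B$ is injective, and that $\pi|_A$ is surjective.

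For injectivity I would use hypothesis $(4)$. A section $s\in H^0(D+E)$ satisfies $r_{D+E}(s)\in K$ precisely when $s$ has at most simple poles at the $Q_i$ and no poles elsewhere on $\mathrm{Supp}(D)$, that is, exactly when $s\in H^0(E+\sum_i Q_i)$. By $(4)$ we have $h^0(E+\sum_i Q_i)=h^0(E)$, and since $H^0(E)=\ker(r_{D+E})$ this forces $H^0(E+\sum_i Q_i)=H^0(E)$, whence $B\cap K=r_{D+E}(H^0(E+\sum_i Q_i))=0$. Thus $\dim\bar t_1(B)=\dim B$. For surjectivity I would use the sections $s_i$ from $(5)$: since $\ord_{Q_i}(s_i)=0$, the image $r_{D+E+F}(s_i)$ has nonzero top-pole coefficient at $Q_i$, while $\ord_{Q_j}(s_i)\ge p\ge1$ for $j\ne i$ kills the top-pole coefficient at the remaining points. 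Hence $\pi(r_{D+E+F}(s_1)),\dots,\pi(r_{D+E+F}(s_{m+1}))$ form a diagonal family spanning $C$, so $\pi|_A$ is onto. Combining the two facts gives $\dim A\ge\dim\bar t_1(B)+(m+1)=\dim B+(m+1)$, which is the desired inequality.

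I expect the main obstacle to be the bookkeeping at the points $Q_i$: one must correctly identify the kernel of $\bar t_1$ with the simple-pole direction and its cokernel with the top-pole direction, and then recognise that hypothesis $(4)$ is exactly what annihilates the kernel contribution while hypothesis $(5)$ is exactly what surjects onto the cokernel. Matching these two defects of $\bar t_1$ to the two hypotheses is the heart of the argument; the local model at $Q_i$ together with the disjointness $\mathrm{Supp}(D)\cap\mathrm{Supp}(E)=\emptyset$ is what makes the identifications precise. (I note in passing that only $\ord_{Q_j}(s_i)\ge1$ is needed here, so the stronger bound $\ge p$ in $(5)$ is not used in this step.)
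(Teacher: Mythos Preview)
Your argument is correct and follows essentially the same route as the paper: both compare $H^0(D+E)/H^0(E)$ with $H^0(D+E+F)/H^0(E+F)$ via multiplication by a section cutting out $F_1$, invoke hypothesis~(4) to get injectivity, and use the sections $s_i$ from~(5) to produce $m+1$ extra independent classes. The only cosmetic difference is that you realise these quotients inside $H^0(\mathcal{O}_D(\,\cdot\,))$ and package the extra classes as a basis of $\mathrm{coker}\,\bar t_1$, whereas the paper works directly with the quotients $H^0(D+E+F)/s_D\!\cdot\! H^0(E+F)$ and checks by an order-at-$Q_i$ computation that the $s_i$ are independent modulo $s_D$ and lie outside the image of $s_{F_1}$.
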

\begin{proof}
Let  $s_{F_1}\in H^0(F)$ with divisor $F_1$ and $s_D\in H^0(D)$ with divisor $D$.
We have a commutative diagram with exact rows:
\begin{equation}
\begin{tikzcd}
0 \arrow[r] & H^0(E) \arrow[d,"\cdot s_{F_1}"] \arrow[r,"\cdot s_D"] & H^0(D+E)  \arrow[d,"\cdot s_{F_1}"] \arrow[r] &H^0(\mathcal{O}_D) \arrow[d,"\cdot s_{F_1}|_D"]\\
0  \arrow[r] & H^0(E+F) \arrow[r,"\cdot s_D"] & H^0(D+E+F) \arrow[r] &H^0(\mathcal{O}_D)\, .
\end{tikzcd} \nonumber
\end{equation}
\begin{claim}\label{claim}
Multiplication by $s_{F_1}$ induces an injective map
$$
\begin{tikzcd}
H^0(D+E)/s_D \cdot H^0(E) \arrow[r,"s_{F_1}"] & H^0(D+E+F)/s_D \cdot H^0(E+F) \, .
\end{tikzcd} \nonumber
$$
\end{claim}
\noindent This Claim follows if
$$
s_{F_1}\cdot H^0(D+E) \cap s_D\cdot H^0(E+F)=s_{F_1}\cdot  s_D\cdot H^0(E)\, .
$$
Because of assumptions $(2)$ and $(3)$, the left hand side of this  equation is equal to 
$
s_D\cdot  s_{F_1}'\cdot H^0(E+\sum_{i=1}^{m+1}Q_i)
$,
where $s_{F_1}'=s_{F_1}/s_0$ for a section $s_0 \in H^0(\sum_{i=1}^{m+1}Q_i)$ with ${\rm div}(s_0)=\sum_{i=1}^{m+1}Q_i$. Then $(4)$ implies 
$
H^0(E+\sum_{i=1}^{m+1}Q_i)=s_0 \cdot H^0(E)
$.
The Claim follows.

By $(5)$, we have $s_1,\ldots, s_{m+1}$ such that for all $i,j$ with $i\neq j$ we have
$
\ord_{Q_i}(s_i)=0$ and $\ord_{Q_i}(s_j)\geq p$. Now we will show that $s_1,\ldots,s_{m+1}$ generate an $m+1$-dimensional subspace of $H^0(D+E+F)/s_D \cdot H^0(E+F)$ with zero intersection with $\Imm(s_{F_1})$. First we will prove the zero intersection part. Assume there exist $c_1,\ldots, c_{m+1}\in k$ such that $\xi =\sum_{i=1}^{m+1}c_is_i $ lies in $\Imm (s_{F_1})$. That means $\xi=s_{F_1}\cdot r+s_D\cdot t$ with some $r\in H^0(D+E)$ and $t\in H^0(E+F)$. If $c_1\neq 0$ then we obtain
$
\ord_{Q_1}(\xi)=0
$.
 However, because $\ord_{Q_1}(F_1)=\ord_{Q_1}(s_{F_1})=1$ and $\ord_{Q_1}(s_D)=\ord_{Q_1}(D)\geq 1$, we have
$
0=\ord_{Q_1}(\xi)=\ord_{Q_1}(s_{F_1}\cdot r+s_D\cdot t)\geq 1 
$, a contradiction if $c_1\neq 0$. Similarly, we can show $c_2=\cdots=c_{m+1}=0$. Then for any non-zero element $\xi$ in $<s_1,\ldots,s_{m+1}>$ one has $\xi\notin \Imm(s_{F_1})$.

Now for the linear independence of $s_1,\ldots,s_{m+1}$, if $\xi=\sum_{i=1}^{m+1}{c_is_i}$ lies in $s_D\cdot H^0(E+F)$, then $\xi=s_D\cdot t$ with $t \in H^0(E+F)$ and we apply the same argument on the orders at $Q_i$ as above with $r=0$. Then we find $c_i=0$ for $i=1,\dots,m+1$. So $s_1,\ldots,s_{m+1}$ are linearly independent in $H^0(D+E+F)/s_D\cdot H^0(E+F)$. By the injectivity Claim \ref{claim} above we then have
$$
h^0(D+E+F)-h^0(D+E)\geq h^0(E+F)-h^0(E)+m+1\, .
$$
\end{proof}  

\section{Proofs of the Theorems (\ref{main theorem rk 1}) and (\ref{main theorem rk 2})}
Before giving the proofs of theorems, we need several lemmas on the relation between the rank of the Cartier operator and geometrical properties of linear systems on a curve.
\begin{lem}\label{existence of base point free divisor of mine}
Let $X$ be a non-hyperelliptic curve with$~{{\rm rank}(\mathcal{C})=m\geq 1}$. Then there exists points $Q_1, \dots, Q_m$ on $X$ such that with $D=\sum_{i=1}^mQ_i$ we have
\begin{align*}
h^0(pD)=1+h^0(pD-Q_m)\, .
\end{align*}
\end{lem}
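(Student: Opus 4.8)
The plan is to reformulate the desired equality as a statement about holomorphic differentials via Serre duality, and then to exploit the order behaviour of $\mathcal{C}$ recalled above together with the fact that $\Imm(\mathcal{C})$ is an $m$-dimensional linear system. For $D=\sum_{i=1}^m Q_i$ one always has $h^0(pD)-h^0(pD-Q_m)\in\{0,1\}$, and by Riemann--Roch and Serre duality this difference equals $1$ precisely when $h^0(K-pD)=h^0(K-pD+Q_m)$, that is, precisely when there is \emph{no} nonzero holomorphic differential $\omega$ with $\ord_{Q_i}(\omega)\ge p$ for $1\le i\le m-1$ and $\ord_{Q_m}(\omega)=p-1$ (since $pD-Q_m=p\sum_{i=1}^{m-1}Q_i+(p-1)Q_m$). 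So it suffices to produce points $Q_1,\dots,Q_m$ admitting no such ``bad'' differential.

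Next I would bring in the Cartier operator. Using properties $1)$ and $2)$, the image $\Imm(\mathcal{C})=\mathcal{C}(H^0(X,\Omega^1_X))$ is a $k$-subspace of $H^0(X,\Omega^1_X)$ of dimension $m$, hence a linear subsystem of $|K|$. Choosing $Q_1,\dots,Q_{m-1}$ general, the vanishing conditions at $Q_1,\dots,Q_{m-1}$ are independent on this $m$-dimensional space, so the subspace $V=\{\eta\in\Imm(\mathcal{C}):\ord_{Q_i}(\eta)\ge 1,\ 1\le i\le m-1\}$ is one-dimensional; fix a generator $\eta_0\ne 0$. Since $X$ is non-hyperelliptic we have $g\ge 3$, so $\eta_0$ has $2g-2\ge m$ zeros, and as $\eta_0$ already vanishes at the distinct points $Q_1,\dots,Q_{m-1}$, for general $Q_1,\dots,Q_{m-1}$ it has a further zero; I would take $Q_m$ to be a zero of $\eta_0$ distinct from $Q_1,\dots,Q_{m-1}$.

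Finally I would check that this configuration has no bad differential. If $\omega$ were bad, then by the order property of $\mathcal{C}$ recalled above, $\mathcal{C}(\omega)$ is holomorphic with $\ord_{Q_i}(\mathcal{C}\omega)\ge 1$ for $i\le m-1$ and $\ord_{Q_m}(\mathcal{C}\omega)=0$; in particular $\mathcal{C}\omega\ne 0$. Thus $\mathcal{C}\omega\in V$, so $\mathcal{C}\omega=c\,\eta_0$ with $c\ne 0$, which forces $\ord_{Q_m}(\mathcal{C}\omega)=\ord_{Q_m}(\eta_0)\ge 1$, contradicting $\ord_{Q_m}(\mathcal{C}\omega)=0$. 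Hence no bad $\omega$ exists, giving $h^0(pD)=1+h^0(pD-Q_m)$.

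I expect the main obstacle to be the genericity bookkeeping in the middle step: verifying that $m-1$ general points impose independent conditions on $\Imm(\mathcal{C})$ (so that $\dim V=1$), and that $\eta_0$ genuinely has a zero away from $Q_1,\dots,Q_{m-1}$ to serve as $Q_m$. Both are routine once one notes $m\le g$ and $2g-2\ge m$, but they are where the argument must be made precise. The conceptual core---pushing a hypothetical bad differential into the $m$-dimensional space $\Imm(\mathcal{C})$ via the order property of $\mathcal{C}$ and then cornering it with exactly one vanishing condition too many---is what makes the bound work with $m$ rather than $m+1$ points, in contrast to Proposition \ref{ri.re prop2.2}.
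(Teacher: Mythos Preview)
Your approach shares the paper's core idea: if the desired equality failed, a ``bad'' differential $\omega$ would exist, and $\mathcal{C}(\omega)$ would lie in $\Imm(\mathcal{C})$ with $\ord_{Q_i}(\mathcal{C}\omega)\ge 1$ for $i<m$ and $\ord_{Q_m}(\mathcal{C}\omega)=0$, which one contradicts by the choice of $Q_m$. The difference is in how the points are produced. You take $Q_1,\dots,Q_{m-1}$ general and let $Q_m$ be an extra zero of the resulting $\eta_0$; the paper instead builds $Q_1,\dots,Q_{m-1}$ explicitly from a triangular basis $\omega_1,\dots,\omega_m$ of $\Imm(\mathcal{C})$ (so that $\omega_j(Q_i)=0$ for $j>i$ and $\omega_i(Q_i)\ne 0$), and then seeks $Q_m$ as a zero of $\omega_m$.

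The gap in your proposal is exactly where you flagged it, but it is not routine. The inequality $2g-2\ge m$ only says $\eta_0$ has at least $m$ zeros \emph{counted with multiplicity}; it does not exclude that all zeros of $\eta_0$ sit among $Q_1,\dots,Q_{m-1}$ with higher multiplicity. Your genericity claim is in fact true, but justifying it amounts to showing that the base-point-free part of the pencil $\{\eta\in\Imm(\mathcal{C}):\eta(Q_i)=0,\ i\le m-2\}$ cannot give a purely inseparable map $X\to\mathbb{P}^1$ (which would force $X\cong\mathbb{P}^1$); this is a real argument, not bookkeeping. The paper sidesteps the issue entirely: after its triangular construction, if $\omega_m$ has no zero outside $\{Q_1,\dots,Q_{m-1}\}$, then since $\deg(\omega_m)=2g-2\ge m>m-1$ some $Q_i$ (say $Q_{m-1}$) is a zero of $\omega_m$ of order $\ge 2$, and the paper runs the same Cartier argument with the \emph{non-reduced} divisor $D=\sum_{i\le m-2}Q_i+2Q_{m-1}$ (i.e.\ it allows $Q_m=Q_{m-1}$). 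So the paper's proof permits repeated points precisely to avoid the step you left unproven.
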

\begin{proof}
Suppose that $\omega_1,\dots,\omega_m$ are differentials that generate $\Imm (\mathcal{C})$. Assume the lemma is not true, that is, for any $m$-tuple $\alpha=(Q_1,\dots,Q_m)$, we have with $D=\sum_{i=1}^{m}Q_i$ that $h^0(p\, D)=h^0(p\,D-Q_m)$.
Then by Serre duality and Riemann-Roch, there exists a $\omega_{D} \in H^0(X,\Omega_X^1)$ that
\begin{align} \label{omega_D 1}
\ord _{Q_i}(\omega_{D}) \geq p, ~1\leq i\leq m-1,~
\ord _{Q_m}(\omega_{D}) =p-1\, .
\end{align}
Let  $\eta := \mathcal{C}(\omega_{D})=\sum_{i=1}^m{\lambda_i \, \omega_i}$ with $\lambda_i\in k$. Then one has 
\begin{align}\label{eta_1}
\ord _{Q_i}(\eta) &\geq 1, ~1\leq i \leq m-1\, , ~
\ord _{Q_m}(\eta) =0\, .
\end{align}
Suppose now that $\omega_1,\ldots,\omega_m$ have a common base point $R$. Then define $Q_m=R$ and choose general points $Q_1,\dots,Q_{m-1}$ such that $Q_1,\dots,Q_{m-1},R$ form $m$ distinct points. Then with $D=\sum_{i=1}^{m-1}Q_i+R$ we have $h^0(pD)=h^0(pD-R)$, hence there exists a $\omega_D$ satisfying (\ref{omega_D 1}). Then $\eta=\mathcal{C}(\omega_D)$ satisfies (\ref{eta_1}) and we have $0=\ord_{Q_m}(\eta)=\ord_{Q_m}(\sum_{i=1}^m \lambda_i\omega_i)\geq 1$, a contradiction.

So we may assume that $\omega_1,\dots,\omega_m$ have no common base point. Choose a point $Q_1$ such that $\omega_1$ does not vanish at $Q_1$, but $\omega_2,\dots,\omega_m$ vanish at $Q_1$.
More generally, assume furthermore that we have $Q_1,\dots,Q_n$ such that $\ord_{Q_i}(\omega_i)=0$ and $\ord_{Q_i}(\omega_j)>0$ for $i=1,\dots,n$ and $i<j\leq m$.

If $\omega_{n+1},\dots,\omega_m$ have a base point $R$ different from $Q_i$ for $i=1,\dots,n$, then we choose $Q_{n+1},\dots,Q_{m-1}$ general distinct points, $Q_m=R$ and let $\alpha=(Q_1,\dots,Q_m)$. By assumption $h^0(p\, D)=h^0(p\, D-Q_m)$ for $D=\sum_{i=1}^mQ_i$, and we find a differential form $\omega_D$ satisfying (\ref{omega_D 1}) and therefore a form $\eta=\mathcal{C}(\omega_D)$ satisfying (\ref{eta_1}), again a contradiction.

So we may assume that $\omega_{n+1},\dots,\omega_m$ do not have common base points except $Q_1,\dots,Q_n$. Choose now a point $Q_{n+1}$ different from $Q_1,\dots,Q_n$ such that $\omega_{n+1}$ does not vanish at $Q_{n+1}$, but $\omega_{n+2},\dots,\omega_m$ all vanish at $Q_{n+1}$. By induction on $n$, we find points $Q_1,\dots,Q_{m-1}$ with
$
\ord_{Q_i}(\omega_i)=0$ and $\ord_{Q_i}(\omega_j)\geq 1$ for $j>i$ and $j=2,\dots,m$.

Now if $\omega_m$ has a zero distinct from $Q_i$ for $i=1,\dots,m-1$, say $Q_m$, we let
$
\alpha=(Q_1,\dots,Q_m)
$ and $D=\sum_{i=1}^mQ_i$.
The assumption $h^0(pD)=h^0(pD-Q_m)$ gives us a differential form $\omega_D$ and $\eta =\mathcal{C}(\omega_{D})=\sum_{i=1}^m \lambda_i \, \omega_i$. By (\ref{eta_1}) we have 
$
0=
\ord_{Q_m}(\eta)=\ord_Q(\lambda_m \omega_m)\geq 1
$, a contradiction. So $\omega_m$ has no zeros outside $Q_1,\dots,Q_{m-1}$.

Now $\deg(\omega_m)=2g-2\geq m$ for $g\geq 2$, so $\omega_m$ vanishes at one $Q_i$ with multiplicity larger than one, say $Q_{m-1}$. Then with $D=\sum_{i=1}^{m-2}Q_i+2Q_{m-1}$ we have $h^0(pD)=h^0(pD-Q_{m-1})$, giving us a differential form $\omega_D$, and $\eta=\mathcal{C}(\omega_D)=\sum_{i=1}^m\lambda_i\, \omega_i$. Then we have
$
\ord_{Q_i}(\eta)\geq 1$ for $i=1,\dots,m-2$ and
$\ord_{Q_{m-1}}(\eta)=1
$.
However, by the induction assumption
\begin{align*}
\ord_{Q_i}(\omega_i)&=0\, ,
\ord_{Q_i}(\omega_j)\geq 1, ~1\leq i<j\leq m-1\, ,\\
\ord_{Q_l}(\omega_m)&\geq 1\, ,
\ord_{Q_{m-1}}(\omega_m)\geq 2,~ l=1,2,\dots,m-2\, .
\end{align*}
So we must have $\lambda_i=0$ for $i=1,\dots,m-1$ and 
$
\ord_{Q_{m-1}}(\eta)\geq 2
$, and we therefore find
$
h^0(pD)=1+h^0(pD-Q_m)
$.
\end{proof}
\begin{cor}\label{rank1 point}
Let $X$ be a non-hyperelliptic curve. If the Cartier operator has ${\rm rank}(\mathcal{C})=1$, there exists a point $R$ of $X$ such that 
$
h^0(pR)=1+h^0((p-1)R) 
$.
\end{cor}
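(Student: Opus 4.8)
The plan is to obtain this simply as the rank-one specialization of Lemma~\ref{existence of base point free divisor of mine}. Since $\mathrm{rank}(\mathcal{C}) = 1$, I set $m = 1$ in that lemma. It then produces a single point $Q_1$ such that, writing $D = Q_1$, one has
\[
h^0(pD) = 1 + h^0(pD - Q_1).
\]
Because $pD - Q_1 = pQ_1 - Q_1 = (p-1)Q_1$, putting $R := Q_1$ yields exactly $h^0(pR) = 1 + h^0((p-1)R)$, which is the assertion.

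There is essentially no additional work to carry out: all the content sits in the lemma, and the corollary only rewrites $pD - Q_m$ as $(p-1)R$ in the degenerate case where $D$ consists of the single point $R = Q_m = Q_1$. The only points worth checking are that the lemma's hypotheses hold here, namely that $X$ is non-hyperelliptic and that $\mathrm{rank}(\mathcal{C}) = m \geq 1$ with $m = 1$, both of which are assumed. I would also note in passing that a non-hyperelliptic curve automatically satisfies $g \geq 3$, so the genus condition $g \geq 2$ used inside the lemma's proof (via $\deg(\omega_m) = 2g - 2 \geq m$) is met without further comment. Accordingly I expect no genuine obstacle; the statement is a direct corollary, and the real effort has already been spent in establishing the lemma.
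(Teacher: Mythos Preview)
Your proposal is correct and matches the paper's approach exactly: the corollary is stated immediately after Lemma~\ref{existence of base point free divisor of mine} with no separate proof, so the paper also treats it as the $m=1$ specialization of that lemma. Your remarks about the hypotheses and the genus condition are accurate but not strictly needed.
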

Combining Lemma \ref{existence of base point free divisor of mine} above and Proposition \ref{divisor}, we have the following result. We denote the canonical divisor (class) by $K_X$.
\begin{cor}\label{Cor rank 1}
Let $X$ be a non-hyperelliptic curve with ${\rm rank}(\mathcal{C})=1$ and let $T_n$ be a general effective divisor of degree$~n$. Put $E=p\, T_n$ and suppose $R$ is a point of $X$ with $h^0(pR)=1+h^0((p-1)R)$. Then the following holds.

\noindent $~i)$ If $h^0(K_X-E)\leq 1$, one has for general points $Q_1$, $Q_2$
\begin{align*}
h^0(E+pR+\sum_{i=1}^2p\,Q_i)-h^0(E+\sum_{i=1}^2p\,Q_i)=p\, .
\end{align*}
$ii)$ If $h^0(K_X-E)\geq 2$, one has for general points $Q_1$, $Q_2$ 
\begin{align*}
h^0(E+pR+\sum_{i=1}^2p\,Q_i)-h^0(E+\sum_{i=1}^2p\,Q_i)\geq 2+h^0(E+pR)-h^0(E)\, .
\end{align*}

\end{cor}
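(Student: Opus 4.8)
The plan is to handle the two cases by different tools: part $i)$ will come straight from Riemann--Roch and Serre duality, whereas part $ii)$ will be an application of Proposition \ref{divisor} with $m=1$.

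For $i)$ I would start from the Riemann--Roch identity, valid for any divisor $G$,
$$
h^0(G+pR)-h^0(G)=p+h^1(G+pR)-h^1(G)=p-\bigl(h^0(K_X-G)-h^0(K_X-G-pR)\bigr),
$$
applied to $G=E+p\,Q_1+p\,Q_2$. Thus it suffices to show that $h^1(E+p\,Q_1+p\,Q_2)$ and $h^1(E+pR+p\,Q_1+p\,Q_2)$ coincide, and I claim both vanish. Since $h^0(K_X-E)\leq 1$, the system $|K_X-E|$, if non-empty, has a single member with fixed finite support; for a general point $Q_1$ off that support no section of $K_X-E$ can vanish to order $p$ at $Q_1$, so $h^0(K_X-E-p\,Q_1)=0$. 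The two $h^1$-groups above are Serre-dual to $h^0(K_X-E-p\,Q_1-p\,Q_2)$ and $h^0(K_X-E-pR-p\,Q_1-p\,Q_2)$, each bounded above by $h^0(K_X-E-p\,Q_1)=0$, hence both vanish, and the displayed identity then gives the difference exactly $p$. The only genericity used is that $Q_1$ avoids the base locus of $|K_X-E|$; note that the special point $R$ plays no role here.

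For $ii)$ I would invoke Proposition \ref{divisor} with $D=p\,Q_1+p\,Q_2$, the given $E$, and $F=pR$. With these choices its conclusion reads
$$
h^0\Bigl(E+pR+\sum_{i=1}^{2}p\,Q_i\Bigr)-h^0(E+pR)\geq h^0\Bigl(E+\sum_{i=1}^{2}p\,Q_i\Bigr)-h^0(E)+2,
$$
which is exactly the rearrangement of the inequality in $ii)$. The two distinguished points required by Proposition \ref{divisor} are then $Q_1,Q_2\in{\rm Supp}(D)$. Of its hypotheses, $(2)$ is immediate from the genericity of $Q_1,Q_2,T_n$, and I want to stress that hypothesis $(4)$, i.e.\ $h^0(E+Q_1+Q_2)=h^0(E)$, is precisely where the standing assumption $h^0(K_X-E)\geq 2$ is consumed: adding a general point lowers $h^1$ by one as long as the relevant canonical subsystem is non-empty, so the equality forces $h^0(K_X-E)\geq 2$. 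The sections demanded in $(5)$ I would produce from base-point-freeness of systems of the shape $|p\,(Q_j+T_n+R)|$, whose underlying divisors have degree $>1={\rm rank}(\mathcal C)$ and so fall under the circle of ideas of Proposition \ref{ri.re prop2.2}.

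The main obstacle, as I see it, is hypothesis $(3)$ together with $(1)$, both of which are assertions about the linear system $|pR|$ attached to the special point. I have to know that $|pR|$ is base point free and, more seriously, that it contains a member $F_1$ passing through the two general points $Q_1,Q_2$ with multiplicity exactly one and meeting ${\rm Supp}(D)$ nowhere else; the latter needs $|pR|$ to have members through two general points, which is a genuine constraint on $h^0(pR)$. This is exactly the point at which the defining property of $R$ from Corollary \ref{rank1 point}, $h^0(pR)=1+h^0((p-1)R)$, must be exploited, the rank-one hypothesis guaranteeing at least that $R$ is not a base point of $|pR|$. If $|pR|$ turns out to be too small to meet two general points, the natural fallback is to prove the inequality first for $Q_1,Q_2$ constrained to a suitable member of $|pR|$ and then to propagate it to general $Q_1,Q_2$ by semicontinuity of $h^0$; making this rigorous is what I expect to be the delicate part of the argument. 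Once the geometry of $|pR|$ is understood, part $ii)$ follows by direct substitution into Proposition \ref{divisor}.
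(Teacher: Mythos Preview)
Your approach is the paper's approach: part $i)$ by Riemann--Roch after forcing $h^0(K_X-E-pQ_1)=0$, and part $ii)$ by applying Proposition~\ref{divisor} with $D=pQ_1+pQ_2$, $E=pT_n$, $F=pR$ and $m+1=2$. Two points where you diverge from, or fall short of, the paper's argument are worth flagging.

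\smallskip
\textbf{Condition (3).} Your worry about the size of $|pR|$ is where the paper spends its one nontrivial sentence, and the key idea you are missing is a \emph{separability} argument, not a dimension count. The linear system $|pR|$ defines a morphism $\phi\colon X\to\mathbb{P}^N$; if $\phi$ were inseparable it would factor through Frobenius, forcing $\dim|R|\geq 1$, which is impossible for $g>0$. Hence $\phi$ is separable, so a general member $F_1\in|pR|$ is reduced at its generic points. The paper then simply \emph{chooses} $Q_1,Q_2$ to be two smooth points of such an $F_1$; there is no attempt to take $Q_1,Q_2$ independently general in $X\times X$, nor is that needed. Once $Q_1$ is chosen general (so that (2), (4), (5) hold) and $F_1$ is the member through $Q_1$, the remaining $p-1$ points of $F_1$ are available for $Q_2$, and avoiding the finitely many bad choices imposed by (2), (4), (5) is harmless. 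So your proposed semicontinuity ``fallback'' is unnecessary: the statement's phrase ``general points $Q_1,Q_2$'' is to be read as ``for a suitable non-empty family of pairs'', which is all that the downstream Corollary~\ref{cor2 rank 1} requires.

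\smallskip
\textbf{Condition (5).} Base-point-freeness of $|p(Q_j+T_n+R)|$ alone does not produce the sections with the prescribed vanishing orders. The paper instead applies Proposition~\ref{ri.re prop2.2} to $E+pQ_i$ (degree $>$ rank$(\mathcal C)$) to get
\[
h^0(E+pQ_i)=1+h^0\bigl(E+(p-1)Q_i\bigr),
\]
which yields $s_i\in H^0(E+pQ_i)\subset H^0(D+E+F)$ with $\ord_{Q_i}(s_i)=0$; since $s_i$ already lies in $H^0(E+pQ_i)$, it automatically has $\ord_{Q_j}(s_i)\geq p$ for $j\neq i$. That is the correct mechanism for (5).
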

\begin{proof}

 $i)$ If $h^0(K_X-E)=0$, i.e.\ $E$ is non-special, Riemann-Roch implies statement $i)$.
If $h^0(K_X-E)=1$, we choose $Q_1$ a non-base point of $|K_X-E|$, then
$
h^0(K_X-E-Q_1)=0$, hence $h^0(K_X-E-p\,Q_1)=0\, .
$
Therefore
$
h^0(K_X-E-\sum _{i=1}^2 p\,Q_i)=h^0(K_X-E-\sum _{i=1}^2 p\,Q_i-p\,R)=0
$ and by Riemann-Roch we have 
$
h^0(E+\sum _{i=1}^2 p\,Q_i+p\,R)-h^0(E+\sum _{i=1}^2 p\,Q_i)=p
$.

\noindent$ii)$ If $h^0(K_X-E)\geq 2$, we write
$
D=p\, Q_1+p\, Q_2 ,~
E=p\, T_n$ and $
F=p\,R
$
and we proceed to verify the conditions $(1)-(5)$ of Proposition \ref{divisor} in this case.
Conditions $(1)$ and $(2)$ are easy consequences of the generality assumptions of $Q_1$, $Q_2$ and $R$.
For condition (3), if the linear system $|pR|$ induces a separable map to projective space, then we can choose $Q_1$ and $Q_2$ to be points where the map is smooth and find an effective divisor $F_1$ such that $\ord_{Q_1}(F_1)=\ord_{Q_2}(F_1)=1$.
If, on the contrary, the map induced by $|pR|$ is inseparable, then $\dim|R| \geq 1$, which is not true for curves of genus larger than zero.

Condition $(4)$ is satisfied once we choose $Q_1$ to be a non-base point of $|K_X-E|$ and $Q_2$ a non-base point of $|K_X-E-Q_1|$, since $h^0(K_X-E)\geq 2$. Then we have
$
h^0(K_X-E-\sum_{i=1}^2Q_i)=h^0(K_X-E)-2
$.

Condition $(5)$ holds as $|E+pR+pQ_1+pQ_2|$ is base point free by Proposition \ref{ri.re prop2.2} 
if $Q_1$ and $Q_2$ are general.
Furthermore by Proposition \ref{ri.re prop2.2}, we have 
$
h^0(E+pQ_i)=1+h^0(E+(p-1)Q_i)$ for $i=1,2 $. Then we obtain $s_1$ and  $s_2$ in $H^0(E+pR+pQ_1+pQ_2)=H^0(D+E+F)$ such that for all $i,j$ we have
$
\ord_{Q_i}(s_i)=0$ and
$\ord_{Q_i}(s_j)\geq p$ for $j\neq i
$.

Then we conclude by Proposition \ref{divisor} above.

\end{proof}
Now we can state some numerical consequences of Corollary \ref{Cor rank 1}.
\begin{cor}\label{cor2 rank 1}
Let $X$ be a non-hyperelliptic curve with ${\rm rank}(\mathcal{C})=1$. Denote by $D_n$ a general divisor of degree$~n$. Then for any integer $n\geq 1$, one has

$~~i)$ ~$p\geq h^0(pD_{2n})-h^0(pD_{2n-1})\geq \min(2n-1,p)$.

$~ii)$ ~For $1\leq n \leq \left  \lceil (p+1)/2\right \rceil$, one has
$
p\geq h^0(pD_{2n-1})-h^0(pD_{2n-2})\geq~{2n-2}
$.

$iii)$~ $pD_p$ is non-special, i.e. $h^0(K_X-pD_p)=0$.

$\,iv)$ \,For $1\leq n \leq \left  \lceil (p+1)/2\right \rceil$, one has
$
h^0(pD_{2n})-h^0(pD_{2n-2})\geq 4n-3
$.

$~\,v)$ \,For $1\leq n \leq \left \lceil (p+1)/2\right \rceil$, one has
$$
h^0(K_X-pD_{2n-2})-h^0(K_X-pD_{2n})\leq 2p-4n+3\, .
$$

$\,vi)$ ~$h^0(K_X-pD_{p-1})\leq 1$.

\end{cor}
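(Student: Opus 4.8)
The plan is to deduce $vi)$ directly from parts $iii)$ and $v)$ of the corollary by specialising the index $n$ to its extremal value, treating $p$ odd and $p=2$ separately.

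For odd $p$ I would set $n=(p+1)/2$, which equals $\lceil (p+1)/2\rceil$ and so lies in the range where part $v)$ is valid. Since then $2n-2=p-1$ and $2n=p+1$, part $v)$ reads
$$
h^0(K_X-pD_{p-1})-h^0(K_X-pD_{p+1})\leq 2p-2(p+1)+3=1 .
$$
Next I would argue that $h^0(K_X-pD_{p+1})=0$: writing the general divisor $D_{p+1}=D_p+Q'$ with $Q'$ a general point gives $pD_{p+1}\geq pD_p$, so monotonicity of $h^0$ together with part $iii)$, which says $pD_p$ is non-special, forces $h^0(K_X-pD_{p+1})\leq h^0(K_X-pD_p)=0$. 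Substituting this back into the displayed inequality yields $h^0(K_X-pD_{p-1})\leq 1$, as required.

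For $p=2$ this substitution is unavailable, because $p-1=1$ is odd while part $v)$ concerns even indices; here I would instead observe that the statement is vacuous. By Corollary \ref{rank1 point} there is a point $R$ with $h^0(2R)=1+h^0(R)$; as $X$ is non-hyperelliptic we have $g\geq 3$ and hence $h^0(R)=1$, so $h^0(2R)=2$. This exhibits a $g^1_2$, contradicting non-hyperellipticity, so no such curve exists and $vi)$ holds trivially.

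I expect the main obstacle to be precisely this parity issue. The reduction to parts $iii)$ and $v)$ is essentially a one-line substitution whenever $p$ is odd, but the boundary case $p=2$ falls outside the scope of part $v)$ and must be disposed of by the separate, slightly surprising observation that Corollary \ref{rank1 point} already rules out non-hyperelliptic rank-one curves in characteristic two. A secondary point to check carefully is that $n=(p+1)/2$ really saturates the admissible range in part $v)$, so that no intermediate value of $n$ is needed.
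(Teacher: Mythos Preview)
Your argument is correct, and for odd $p$ it is essentially the paper's proof viewed through the lens of part $v)$ rather than unpacked into its constituents. The paper works directly with the inequality $h^0(pD_p)-h^0(pD_{p-1})\geq p-1$ (which for odd $p$ is part $ii)$ at $n=(p+1)/2$), combines it with $h^0(K_X-pD_p)=0$ from $iii)$, and applies Riemann--Roch. Your route through $v)$ is the Riemann--Roch dual of $iv)=i)+ii)$, so you are implicitly invoking one extra inequality (part $i)$) but landing in the same place.

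The genuine difference is your handling of $p=2$. Your vacuity argument via Corollary~\ref{rank1 point} is valid and is a pleasant observation in its own right, but it is not needed: the key inequality $h^0(pD_p)-h^0(pD_{p-1})\geq p-1$ holds for $p=2$ as well, since $h^0(2D_2)-h^0(2D_1)\geq 1$ is exactly part $i)$ with $n=1$. Had you worked directly from this inequality and Riemann--Roch (as the paper does) rather than through $v)$, the parity obstruction that forced your case split would never have arisen, and the proof would be uniform in $p$.
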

\begin{proof}
$i)$ For $n\in \mathbb{Z}_{>0}$, one can always has$~p\geq h^0(pD_{2n})-h^0(pD_{2n-1})$. We will prove the second inequality in $i)$ by induction on $n$.

In the case $n=1$, by Proposition \ref{ri.re prop2.2}, for general points $Q_1$, $Q_2$ one has
\begin{align*}
h^0(pQ_1+pQ_2)=1+h^0(pQ_1+(p-1)Q_2)\, ,
\end{align*}
 and thus with $D_2=Q_1+Q_2$ and $D_1=Q_1$, we see $h^0(pD_2)\geq 1+h^0(pD_1)$.
Now we do induction and assume $h^0(pD_{2n-2})-h^0(pD_{2n-3})\geq 2n-3$. We apply Corollary \ref{Cor rank 1} with $E=pD_{2n-3}$ for $n\geq 2$. If $h^0(K_X-E)\leq 2$, then we have
$
h^0(pD_{2n})-h^0(pD_{2n-1})=p 
$.
Otherwise, Corollary \ref{Cor rank 1} implies
$$
h^0(pD_{2n})-h^0(pD_{2n-1})\geq 2+h^0(pD_{2n-2})-h^0(pD_{2n-3})\geq 2n-1 \, .
$$
and we are done.

\noindent $ii)$ The case $n=1$ is trivial. Assuming the assertion for $n-1$, we will prove
\begin{align}\label{equ cor3.4}
h^0(pD_{2n-1})-h^0(pD_{2n-2})\geq 1+h^0(pD_{2n-2})-h^0(pD_{2n-3})\, ,
\end{align}
and by $i)$ the right hand side is at least $2n-2$, which suffices for $iii)$. To prove the inequality (\ref{equ cor3.4}), take $D=p\, Q_1$, $E=p\, D_{2n-3}$ and $F=p\, R$ with the point $R$ satisfying $h^0(pR)=1+h^0((p-1)R)$ (see Corollary \ref{rank1 point}) and $Q_1$ a general point. We are going to verify the conditions $(1)-(5)$ of Proposition \ref{divisor} in the case of $m=0$. Conditions $(1)$ and $(2)$ are obvious by the property of $R$ and generality of $Q_1$. For condition $(3)$, the map induced by $|pR|$ is separable for curves of genus $g>0$. We can choose $Q_1$ to be a point where the map is smooth.

For condition $(4)$ we can choose $Q_1$ to be a non-base point of $|K_X-E|$ as it is non-empty. For condition $(5)$, as $E=pD_{2n-3}$ with $n\geq 2$, we have for any point $Q$ in ${\rm Supp}(E)$, $|pQ+pQ_1|$ is base point free due to Proposition \ref{ri.re prop2.2}. Then $|D+E+F|$ is base point free and by Proposition \ref{divisor} we have
\begin{align*}
h^0(pD_{2n-1})-h^0(pD_{2n-2})\geq 1+h^0(pD_{2n-2})-h^0(pD_{2n-3})\geq 2n-2\, .
\end{align*}
\noindent $iii)$ For $p$ odd, we let $n=(p+1)/2$ and apply $i)$ get 
$
h^0(pD_{p+1})-h^0(pD_p)\geq p
$. For $p=2$, we let $n=2$ and apply $ii)$ we also get $
h^0(pD_{p+1})-h^0(pD_p)\geq p
$. So we have
$
h^0(K_X-pD_{p})=h^0(K_X-pD_{p+1})
$. In other words, for a general point$~Q$, we see that $p\, Q$ lies in the base locus of $|K_X-pD_p|$. This can only happen when 
$
h^0(K_X-pD_{p})=0
$.
\noindent Property $iv)$ follows by combining $i)$ and $ii)$. Property $v)$ follows by $iv)$ and Riemann-Roch. For property $vi)$, by $ii)$ and $iii)$, it is known that
\begin{align*}
h^0(pD_p)-h^0(pD_{p-1})&\geq p-1\, ,~
h^0(K_X-pD_p)=0\, .
\end{align*} 
We have
\begin{align*}
h^0(K_X-pD_{p-1})&=h^0(pD_{p-1})+1-g-p(p-1) \\
&\leq h^0(pD_p)+1-g-p(p-1)-(p-1)\\
&=h^0(K_X-pD_p)+1=1\, .
\end{align*}
\end{proof}
Using the inequalities in Corollary \ref{cor2 rank 1}, we can easily prove Theorem \ref{main theorem rk 1}.

\begin{proof}[Proof of Theorem \ref{main theorem rk 1}]
We estimate $g=h^0(K_X)$ by 
\begin{align*}
h^0(K_X)&=\sum_{n=1}^{\left \lceil (p+1)/2\right \rceil}{(h^0(K_X-pD_{2n-2})-h^0(K_X-pD_{2n}))}+h^0(K_X-pD_{p-1}) \\
&\leq \sum_{n=1}^{\left \lceil (p+1)/2\right \rceil}(2p-4n+3)+1
=p+p(p-1)/2\, .
\end{align*}
\end{proof}

Our approach to the case ${\rm rank}(\mathcal{C})=2$ is similar, but there are differences due to the existence of special linear systems. We now give the analogue of Corollary \ref{Cor rank 1}.

\begin{cor}\label{Cor rank 2}
Let $X$ be a non-hyperelliptic curve with ${\rm rank}(\mathcal{C})=2$, and let $T_n$ be a general effective divisor of degree$~n$ and put $E=p\, T_n$. Let $Q_1$, $Q_2$, $Q_3$ be general points of $X$ and put $D=E+\sum_{i=1}^3p\, Q_i$.

\noindent $1)$ Assume there exists $R$ of $X$ such that $h^0(p\, R)=1+h^0((p-1)\,R)$.

$a)$ If $h^0(K_X-E)\leq 2$, then one has 
$
h^0(D+pR)-h^0(D)=p
$.

$b)$ If $h^0(K_X-E)\geq 3$, then one has
$$
h^0(D+pR)-h^0(D)\geq h^0(E+pR)-h^0(E)+3\, .
$$

\noindent $2)$ If there does not exists such a point $R$, we choose points $R_1$,$R_2$ satisfying $h^0(\sum_{i=1}^2p\, R_i)=1+h^0(\sum_{i=1}^2p\, R_i-R_2)$ and let $\deg E \geq 2p$.

$a)$ If $h^0(K_X-E)\leq 2$, then one has 
$
h^0(D+\sum_{j=1}^2pR_j)-h^0(D)=2p
$.

$b)$ If $h^0(K_X-E)\geq 3$, then one has
$$
h^0(D+\sum_{i=1}^2pR_j)-h^0(D)\geq h^0(E+\sum_{j=1}^2pR_j)-h^0(E)+3\, .
$$

\end{cor}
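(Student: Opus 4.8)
The plan is to split into four cases according to part $1)$ or $2)$ and whether $h^0(K_X-E)\leq 2$ or $h^0(K_X-E)\geq 3$, paralleling the proof of Corollary \ref{Cor rank 1}: the cases $a)$ are direct Riemann--Roch computations, while the cases $b)$ are single applications of Proposition \ref{divisor} with $m=2$. Throughout, write $F=pR$ in part $1)$ and $F=\sum_{j=1}^2 p\,R_j$ in part $2)$; in part $2)$ the points $R_1,R_2$ with $h^0(\sum_{j=1}^2 p\,R_j)=1+h^0(\sum_{j=1}^2 p\,R_j-R_2)$ exist by Lemma \ref{existence of base point free divisor of mine}.

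\textbf{The cases $a)$.} By Riemann--Roch,
$$
h^0(D+F)-h^0(D)=\deg F-\bigl(h^0(K_X-D)-h^0(K_X-D-F)\bigr),
$$
and $\deg F$ equals $p$ in part $1a)$ and $2p$ in part $2a)$. Hence it suffices to show $h^0(K_X-D)=h^0(K_X-D-F)=0$. Since $K_X-D=K_X-E-\sum_{i=1}^3 p\,Q_i$ and $h^0(K_X-E)\leq 2$, I would take $Q_1$ a non-base point of $|K_X-E|$ and $Q_2$ a non-base point of $|K_X-E-Q_1|$, forcing $h^0(K_X-E-Q_1-Q_2)=0$. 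As $\sum_{i=1}^3 p\,Q_i\geq Q_1+Q_2$ and $F$ is effective, we get $h^0(K_X-D)\leq h^0(K_X-E-Q_1-Q_2)=0$ and likewise $h^0(K_X-D-F)=0$, which gives the stated equalities.

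\textbf{The cases $b)$.} Here I would apply Proposition \ref{divisor} with $m=2$ and the identification $D_{\mathrm P}=\sum_{i=1}^3 p\,Q_i$, $E_{\mathrm P}=E=p\,T_n$ and $F_{\mathrm P}=F$. Then $D_{\mathrm P}+E_{\mathrm P}=D$ and $D_{\mathrm P}+E_{\mathrm P}+F_{\mathrm P}=D+F$, so the conclusion of Proposition \ref{divisor} becomes exactly
$$
h^0(D+F)-h^0(D)\geq h^0(E+F)-h^0(E)+3,
$$
which is what is claimed. The hypothesis $h^0(K_X-E)\geq 3$ is precisely what makes condition $(4)$ available: choosing $Q_1,Q_2,Q_3$ as successive non-base points of $|K_X-E|$, $|K_X-E-Q_1|$ and $|K_X-E-Q_1-Q_2|$ yields $h^0(E+\sum_{i=1}^3 Q_i)=h^0(E)$. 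For condition $(5)$, one has $D_{\mathrm P}+E_{\mathrm P}+F_{\mathrm P}=p\,G$ with $\deg G>2={\rm rank}(\mathcal{C})$, so Proposition \ref{ri.re prop2.2} shows $|D_{\mathrm P}+E_{\mathrm P}+F_{\mathrm P}|$ is base point free; and a function with a pole of order exactly $p$ at $Q_i$ and no pole at the other $Q_j$ (coming from $h^0(E+F+p\,Q_i)=1+h^0(E+F+(p-1)Q_i)$, again Proposition \ref{ri.re prop2.2}) gives a section $s_i$ with $\ord_{Q_i}(s_i)=0$ and $\ord_{Q_j}(s_i)\geq p$ for $j\neq i$, exactly as in Corollary \ref{Cor rank 1}.

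\textbf{The main obstacle.} The delicate conditions are $(1)$ and $(3)$: the base-point-freeness of $|F|$ and the existence of a member $F_1\in|F|$ meeting each of the three general points $Q_1,Q_2,Q_3$ with multiplicity one. In part $1)$ this is handled as in Corollary \ref{Cor rank 1}: the map attached to $|pR|$ is separable, since an inseparable map would factor through Frobenius and force $\dim|R|\geq 1$, impossible for $g>0$, and a general member then passes simply through the $Q_i$. Part $2)$ is where the genuine difficulty lies, and is the source both of the extra hypothesis $\deg E\geq 2p$ and of the weaker bound in Theorem \ref{main theorem rk 2}: with no single good point $R$ available one must instead control the system $|\sum_{j=1}^2 p\,R_j|$ attached to the degree-two divisor $R_1+R_2$. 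One first rules out inseparability of the associated map---an inseparable map would now produce a $g^1_2$, contradicting that $X$ is non-hyperelliptic---and then must produce the required $F_1$ through three general points. I expect this verification of $(1)$ and $(3)$ for the degree-two system, which is exactly where the hypothesis $\deg E\geq 2p$ and the defining property of $R_1,R_2$ must be brought to bear, to be the technical heart of the argument.
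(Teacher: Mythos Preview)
Your proposal is correct and follows exactly the paper's approach. The paper's own proof is essentially one sentence: it says the argument parallels that of Corollary~\ref{Cor rank 1}, the only new point being precisely the observation you make for part~$2)$, namely that the map induced by $|pR_1+pR_2|$ is separable since otherwise $\dim|R_1+R_2|\geq 1$ would force $X$ to be hyperelliptic. So the ``technical heart'' you anticipate for conditions $(1)$ and $(3)$ is already dispatched by your $g^1_2$ argument---once separability is known, a general fibre of the map meets the general $Q_i$ with multiplicity one exactly as in the rank-$1$ case, and there is nothing further to do.

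One small misattribution: the hypothesis $\deg E\geq 2p$ in part~$2)$ is not there for conditions $(1)$ and $(3)$ but for condition~$(5)$. Following the template of Corollary~\ref{Cor rank 1}, the sections $s_i$ come from $h^0(E+pQ_i)=1+h^0(E+(p-1)Q_i)$ via Proposition~\ref{ri.re prop2.2}; since $E+pQ_i=p(T_n+Q_i)$ and ${\rm rank}(\mathcal{C})=2$, this requires $\deg(T_n+Q_i)\geq 3$, i.e.\ $n\geq 2$. Your variant using $h^0(E+F+pQ_i)$ instead is less clean here, because $R_1,R_2$ are specific rather than general points and Proposition~\ref{ri.re prop2.2} does not directly apply to $T_n+R_1+R_2+Q_i$.
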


\noindent Note that in $2)$ we can choose such $R_1$ and $R_2$ by Lemma \ref{existence of base point free divisor of mine}. The proof is similar to the proof of Corollary \ref{Cor rank 1}. But we point out that~in the proof of part $2)$, instead of using the separable map induced by $|p\,R|$ in part $ii)$ of the proof of Corollary \ref{Cor rank 1}, we consider the map induced by $|p\,R_1+p\,R_2|$ with points $R_1$ and $R_2$. This map is separable, otherwise $\dim|R_1+R_2|\geq 1$,  contradicting that $X$ is non-hyperelliptic.

The following two corollaries are the analogues of Corollary \ref{cor2 rank 1}.
\begin{cor}\label{cor2 rank 2 case1}
Let $X$ be a non-hyperelliptic curve with ${\rm rank}(\mathcal{C})=2$. Denote by $D_n$ a general divisor of degree$~n$. If there exists a point $R$ of  $X$ that $|p\, R|$ is base point free, then for any integer $n\geq 1$, one has

\noindent ~~$i)$ $~p\geq h^0(pD_{3n})-h^0(pD_{3n-1})\geq \min(3\,n-2,p)$.\\
$~~ii)$ ~For $1\leq n \leq \left \lceil (p+2)/3 \right \rceil$, one has
$$
2\,p\geq h^0(pD_{3n-1})-h^0(pD_{3n-3})\geq  \max (6\,n-7,0)\, .
$$
$iii)$ ~$pD_{p+1}$ is non-special, i.e. $h^0(K_X-p\,D_{p+1})=0$.\\
$\,iv)$ ~For $1\leq n \leq \left \lceil (p+2)/3 \right \rceil$, one has
$
h^0(pD_{3n})-h^0(pD_{3n-3})\geq 9\,n-9
$.\\
$~\,~v)$ ~For $1\leq n \leq \left \lceil (p+2)/3 \right \rceil$, one has
$$
h^0(K_X-pD_{3n-3})-h^0(K_X-pD_{3n})\leq 3\,p-9\,n+9\, .
$$
$\,vi)$ ~$\,h^0(K_X-pD_{p-1})\leq 3$.

\end{cor}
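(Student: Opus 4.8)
The plan is to run the argument of Corollary~\ref{cor2 rank 1} (the case $\mathrm{rank}(\mathcal{C})=1$) with the step size $2$ replaced by the step size $3$, since now $m+1=3$ general points are added at a time, and with Corollary~\ref{Cor rank 2}, part~$1)$, playing the role that Corollary~\ref{Cor rank 1} played there. Throughout, $Q$ denotes a general point and $R$ is the fixed point with $|pR|$ base point free supplied by hypothesis; this gives $h^0(pR)=1+h^0((p-1)R)$, so we are always in case~$1)$ of Corollary~\ref{Cor rank 2}.

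First I would prove $i)$ by induction on $n$. The base case $n=1$ is Proposition~\ref{ri.re prop2.2} with $m=2$, giving $h^0(pD_3)\geq 1+h^0(pD_2)$; for the inductive step I apply Corollary~\ref{Cor rank 2}, part~$1)$, with $E=pD_{3n-4}$, so that $D=E+pQ_1+pQ_2+pQ_3=pD_{3n-1}$ and $D+pR=pD_{3n}$. According as $h^0(K_X-E)\leq 2$ or $\geq 3$, case~$a)$ gives $h^0(pD_{3n})-h^0(pD_{3n-1})=p$, and case~$b)$ gives $h^0(pD_{3n})-h^0(pD_{3n-1})\geq\bigl(h^0(pD_{3n-3})-h^0(pD_{3n-4})\bigr)+3\geq 3n-2$ by induction; the two cases combine to $\min(3n-2,p)$. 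For $ii)$ the new feature is that the increment $h^0(pD_{3n-1})-h^0(pD_{3n-3})$ spans two point-increments, so I split it with Proposition~\ref{divisor} at $m=0$, $D=pQ_1$, $F=pR$: taking $E=pD_{3n-4}$ gives $h^0(pD_{3n-2})-h^0(pD_{3n-3})\geq\bigl(h^0(pD_{3n-3})-h^0(pD_{3n-4})\bigr)+1\geq 3n-4$ from $i)$, and taking $E=pD_{3n-3}$ gives $h^0(pD_{3n-1})-h^0(pD_{3n-2})\geq 3n-3$, and summing yields $6n-7$. Checking hypotheses $(1)$--$(5)$ of Proposition~\ref{divisor} is routine and uses the same inputs as Corollary~\ref{Cor rank 1}: base point freeness of $|pR|$, general position of $Q_1$, separability of the map defined by $|pR|$ on a curve of positive genus, the choice of $Q_1$ off the base locus of $|K_X-E|$ for condition $(4)$, and base point freeness of $|pQ+pQ_1|$ from Proposition~\ref{ri.re prop2.2}.

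The remaining parts are then formal. For $iii)$ I would show $h^0(pD_{p+2})-h^0(pD_{p+1})=p$: this is immediate from $i)$ when $p\equiv 1\pmod 3$, and otherwise follows by contradiction, for if $pD_{p+1}$ were special then condition $(4)$ of Proposition~\ref{divisor} would hold and the residue-$0\to 1$ single step above would force the increment to be $p$; either way Riemann--Roch gives $h^0(K_X-pD_{p+2})=h^0(K_X-pD_{p+1})$, so a general point $Q$ lies in the base locus of $|K_X-pD_{p+1}|$, which can only happen if that system is empty. Part $iv)$ is the sum of $i)$ and $ii)$; part $v)$ follows from $iv)$ by Riemann--Roch; and $vi)$ follows by feeding the increment bounds of $ii)$ and the non-speciality $iii)$ through Riemann--Roch, exactly as in the rank-$1$ computation.

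The hard part will be the top of the admissible range, where the divisors $pD_k$ pass from special to non-special --- precisely the transition recorded by $iii)$. There the linear bounds of $i)$ and $ii)$ run up against the ceilings $p$ and $2p$ per batch of points, and one must verify that condition $(4)$ of Proposition~\ref{divisor}, which needs $|K_X-E|$ nonempty, still holds; once $E$ is non-special the relevant increment is simply $p$ by Riemann--Roch and must be substituted for the output of Proposition~\ref{divisor}. Since $p$ is prime the residue of $p$ modulo $3$ is restricted, and the bookkeeping is sharpest exactly at $n=\lceil(p+2)/3\rceil$, where one must confirm that the stated constants are attained; tracking them through this transition, together with checking the small primes $p=2,3$ by hand, is where the real work lies, the rest being a direct transcription of the rank-$1$ case.
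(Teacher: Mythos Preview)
Your proposal follows exactly the approach the paper intends: the paper does not write out a proof of this corollary at all, stating only that it is ``similar to the proof of Corollary~\ref{cor2 rank 1}'', and your plan---replace the step size $2$ by $3$, use Corollary~\ref{Cor rank 2} part~$1)$ in place of Corollary~\ref{Cor rank 1}, and handle the intermediate single-point increments in~$ii)$ by two applications of Proposition~\ref{divisor} with $m=0$---is precisely that transcription. Your identification of the delicate points (the special/non-special transition near $n=\lceil(p+2)/3\rceil$, the residue of $p$ modulo~$3$ in~$iii)$, and the need to switch to the Riemann--Roch value $p$ once condition~$(4)$ fails) is accurate and in fact more explicit than anything the paper supplies.
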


\begin{cor}\label{cor2 rank 2 case 2}
Let $X$ be a non-hyperelliptic curve with ${\rm rank}(\mathcal{C})=2$. Denote by $D_n$ a general divisor of degree$~n$. If $X$ does not possess a point $R$ such that $|p\, R|$ is base point free, then for any integer $n\geq 1$, one has

\noindent ~~$i)$ ~$2\,p\geq h^0(pD_{3n})-h^0(pD_{3n-2})\geq \min(3\,n-2,2p)$.\\
$~~ii)$ ~For $2\leq n \leq \left\lceil (2p+2)/3 \right \rceil$, one has
$$
2\,p\geq h^0(pD_{3n-2})-h^0(pD_{3n-3})\geq  1\, .
$$
$iii)$ ~$pD_{2p}$ is non-special, i.e. $h^0(K_X-pD_{2p})=0$.\\
\noindent$\,iv)$ ~For $2\leq n \leq \left \lceil (2p+2)/3 \right \rceil$, one has
$
h^0(pD_{3n})-h^0(pD_{3n-3})\geq 3\,n-1
$. 

\hspace{2.5mm}For $n=1$, one has 
$
h^0(pD_{3})-h^0(pD_{0})\geq 1
$.\\
$~\,~v)$ ~For $2\leq n \leq \left \lceil (2p+2)/3 \right \rceil$, one has
$$
h^0(K_X-pD_{3n-3})-h^0(K_X-pD_{3n})\leq 3\,p-3\,n+1\, .
$$

\hspace{2.8mm}For $n=1$, one has
$
h^0(K_X)-h^0(K_X-pD_{3})\leq 3\,p-1
$.

\noindent$\,vi)$~ $\,h^0(K_X-pD_{2p-1})\leq p-1$.

\end{cor}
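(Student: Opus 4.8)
The plan is to mirror the proof of Corollary \ref{cor2 rank 1}, replacing Corollary \ref{Cor rank 1} by part $2)$ of Corollary \ref{Cor rank 2} and the single special point $R$ by the pair $R_1,R_2$, so that each application produces a jump of size $2p$ (coming from $pR_1+pR_2$) rather than $p$. Accordingly the basic increment is now a \emph{two}-point step $pD_{3n-2}\to pD_{3n}$, which is what $i)$ records, while the one-point step $pD_{3n-3}\to pD_{3n-2}$ is controlled separately in $ii)$.

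For $i)$ I would argue by induction on $n$. The upper bound $h^0(pD_{3n})-h^0(pD_{3n-2})\leq 2p$ is immediate from $\deg(pD_{3n}-pD_{3n-2})=2p$. For the lower bound, when $n\geq 3$ I set $E=pD_{3n-5}$, so that $\deg E=p(3n-5)\geq 2p$, and apply Corollary \ref{Cor rank 2} $2)$ with general points $Q_1,Q_2,Q_3$ and the special points $R_1,R_2$; then $D=E+\sum_{i=1}^3 pQ_i=pD_{3n-2}$ and $D+\sum_{j=1}^2 pR_j=pD_{3n}$. If $h^0(K_X-E)\leq 2$, part $2a)$ gives the difference equal to $2p$, while if $h^0(K_X-E)\geq 3$, part $2b)$ gives
\begin{align*}
h^0(pD_{3n})-h^0(pD_{3n-2})\geq 3+\bigl(h^0(pD_{3n-3})-h^0(pD_{3n-5})\bigr)\geq 3+\min(3n-5,2p)\, ,
\end{align*}
by the induction hypothesis, which together with the cap $2p$ yields $\min(3n-2,2p)$. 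The two smallest values are genuine base cases, because there $\deg E<2p$: for $n=1$ the bound $h^0(pD_3)-h^0(pD_1)\geq 1$ follows from Proposition \ref{ri.re prop2.2} with $m=2$, and for $n=2$ I would apply Proposition \ref{divisor} directly with $m=2$, $E=pD_1$, $D=\sum_{i=1}^3 pQ_i$ and $F=pR_1+pR_2$, obtaining $h^0(pD_6)-h^0(pD_4)\geq 3+\bigl(h^0(pD_3)-h^0(pD_1)\bigr)\geq 4$.

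Parts $ii)$--$vi)$ then follow the template of Corollary \ref{cor2 rank 1}. For $ii)$ I use Proposition \ref{divisor} with $m=0$ to get $h^0(pD_{3n-2})-h^0(pD_{3n-3})\geq 1+(\cdots)\geq 1$; here $n\geq 2$ guarantees $3n-3>2={\rm rank}(\mathcal{C})$, so $|pD_{3n-3}|$ is base point free by Proposition \ref{ri.re prop2.2}, which is exactly what condition $(5)$ needs. Part $iv)$ is obtained by adding $i)$ and $ii)$, giving $\min(3n-2,2p)+1=3n-1$ in the stated range (the case $n=1$ being the $n=1$ instance of $i)$ together with $h^0(pD_0)=1$), and $v)$ is just $iv)$ fed through Riemann--Roch, as in Corollary \ref{cor2 rank 1}. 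For $iii)$ I would combine the maximal two-point jump from $i)$ (for $n$ with $3n-2\geq 2p$, where the lower bound $\min(3n-2,2p)=2p$ and the trivial upper bound $2p$ force the difference to equal $2p$, hence both intermediate one-point jumps equal $p$) with the monotonicity of the one-point jumps supplied by Lemma \ref{R.R}: writing $s_k=h^0(pD_k)-h^0(pD_{k-1})$, Lemma \ref{R.R} gives $s_k\leq s_{k+1}\leq p$, and a short case analysis on $p\bmod 3$ then shows $s_{2p+1}=p$, which is equivalent to $h^0(K_X-pD_{2p})=0$. Finally $vi)$ follows from $iii)$ and $ii)$: Riemann--Roch gives $h^0(K_X-pD_{2p-1})=p-s_{2p}$, and $s_{2p}\geq 1$ by monotonicity together with $ii)$, so $h^0(K_X-pD_{2p-1})\leq p-1$.

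The main obstacle, and the chief difference from the rank-one case, is the hypothesis $\deg E\geq 2p$ in Corollary \ref{Cor rank 2} $2)$: it excludes the small values $n=1,2$ from the inductive step, so these must be treated by hand as above, and it is the arithmetic source of the shifted thresholds (degree $2p$ in $iii)$, $iv)$ and $vi)$ in place of the $p$ appearing in Corollary \ref{cor2 rank 1}). I also expect the delicate point to be $iii)$: because the two special points are added together, the maximal two-point jump lands exactly on degree $2p$ only when $3\mid 2p+2$, and pinning non-speciality down to $pD_{2p}$ for the other residue classes of $p$ (the prime case $p=3$ in particular) really requires the monotonicity of jumps from Lemma \ref{R.R}, not merely part $i)$. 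One should moreover recheck the hypotheses of Proposition \ref{divisor} with the pair $R_1,R_2$ in place of $R$: condition $(3)$ now uses that the map attached to $|pR_1+pR_2|$ is separable (otherwise $\dim|R_1+R_2|\geq 1$, contradicting that $X$ is non-hyperelliptic), and condition $(4)$, which for $m=2$ reads $h^0(E+\sum_{i=1}^3 Q_i)=h^0(E)$, needs $h^0(K_X-E)\geq 3$, matching the case division in Corollary \ref{Cor rank 2}.
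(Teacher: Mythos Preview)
Your proposal follows essentially the same route the paper indicates: the paper declares that the proof is ``similar to the proof of Corollary~\ref{cor2 rank 1}'' and omits it, and your outline carries out exactly that transposition, replacing the single point $R$ by the pair $R_1,R_2$ and invoking part~2) of Corollary~\ref{Cor rank 2} in the inductive step for $i)$.

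Two minor differences are worth recording. First, for $ii)$ the paper's intended argument (visible in its commented-out draft) does not go through Proposition~\ref{divisor} at all; it simply iterates Lemma~\ref{R.R} to get $h^0(pD_{3n-2})-h^0(pD_{3n-3})\geq h^0(pD_{3n-3})-h^0(pD_{3n-4})\geq\cdots\geq h^0(pD_4)-h^0(pD_3)\geq 1$, the last inequality coming from base-point-freeness of $|pD_4|$ via Proposition~\ref{ri.re prop2.2}. Your route via Proposition~\ref{divisor} with $m=0$ also works, but is more elaborate than needed. Second, you are more scrupulous than the paper about the hypothesis $\deg E\geq 2p$ in Corollary~\ref{Cor rank 2}~2), correctly isolating $n=1,2$ as base cases, and about the residue of $p$ modulo $3$ in $iii)$, where the paper's draft simply writes ``let $n=(2p+2)/3$''. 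Your observation that for $p\equiv 0,1\pmod 3$ one must combine the two-point bound from $i)$ with the monotonicity of one-point jumps from Lemma~\ref{R.R} (so that, e.g., $s_{3n-1}+s_{3n}\geq 2p-1$ together with $s_{3n-1}\leq s_{3n}$ forces $s_{3n}=p$) is exactly the missing step. One caution on your base case $n=2$ for $i)$: verifying condition~(5) of Proposition~\ref{divisor} with $m=2$ and $E=pD_1$ requires $h^0(E+pQ_i)>h^0(E+(p-1)Q_i)$, but Proposition~\ref{ri.re prop2.2} only supplies this when $E/p+Q_i$ has degree at least $m+1=3$, i.e.\ $\deg E\geq 2p$; you should either justify this separately or, as the paper implicitly does, be content with the weaker bound at $n=2$ since the application to Theorem~\ref{main theorem rk 2} only uses $iv)$ and $v)$ in the range where the induction is already running.
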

The proofs of two corollaries above are similar to the proof of Corollary \ref{cor2 rank 1} and therefore we omit these. The corollaries above now readily imply the proof of theorem in the case of ${\rm rank} (\mathcal{C})=2$.
\begin{proof}[Proof of Theorem \ref{main theorem rk 2}]

$(1)$ If $|p\,R|$ is base point free, then by Corollary \ref{cor2 rank 2 case1} we have
\begin{align*}
h^0(K_X)&=\sum_{n=1}^{\left \lceil (p-1)/3 \right \rceil}(h^0(K_X-pD_{3n-3})-h^0(K_X-pD_{3n}))+h^0(K_X-pD_{p-1})\\
&\leq \sum_{n=2}^{\left \lceil (p-1)/3 \right \rceil}(3\,p-9\,n+9)+1+3=p(p-1)/2+2\,p\, .
\end{align*}
$(2)$ Otherwise, by Corollary~\ref{cor2 rank 2 case 2} we have
\begin{align*}
h^0(K_X)&=\sum_{n=1}^{\left \lceil (2p-1)/3 \right \rceil}(h^0(K_X-pD_{3n-3})-h^0(K_X-pD_{3n}))+h^0(K_X-pD_{2p-1})\\
&\leq\sum_{n=2}^{\left \lceil (2p-1)/3 \right \rceil}(3\,p-3\,n+1)+3\,p-1+p-1=2\,p+(4\,p^2-5\,p)/3\, .
\end{align*}

\end{proof}
\bibliographystyle{alpha}

\Addresses
\end{document}